
\documentclass[10pt]{article}
\usepackage{amssymb}
\usepackage{amscd}
\usepackage{amsmath}
\usepackage{amsfonts}
\usepackage{amsthm}
\usepackage{color}
\usepackage{enumerate}
\usepackage{enumitem}





\theoremstyle{plain}

\newtheorem{theo}{Theorem}[section] 
\newtheorem{prop}[theo]{Proposition}
\newtheorem{lemme}[theo]{Lemma}
\newtheorem{cor}[theo]{Corollary}

\theoremstyle{definition}

\newtheorem{rem}[theo]{Remark}


\newcommand{\Ci}{{\mathcal{C}}^{\infty}}

\newcommand{\R}{{\mathbb{R}}}
\newcommand{\N}{{\mathbb{N}}}
\newcommand{\C}{{\mathbb{C}}}
\newcommand{\Z}{{\mathbb{Z}}}

\newcommand{\be}{{\beta}}
\newcommand{\al}{{\alpha}}

\newcommand{\De}{{\Delta}}
\newcommand{\si}{{\sigma}}
\newcommand{\hb}{{\hbar}}

\newcommand{\te}{{\theta}}
\newcommand{\om}{{\omega}}
\newcommand{\Om}{\Omega}

\newcommand{\op}{\operatorname}


\newcommand{\bigo}{\mathcal{O}}    

\newcommand{\con}{\overline}

\newcommand{\Fa}{\mathcal{A}}
\newcommand{\To}{\mathcal{T}}
\newcommand{\Tosc}{\mathcal{T}_{\op{sc}}}

\newcommand{\sy}{\mathcal{S}} 

\newcommand{\synorsp}{\mathcal{E}}
\newcommand{\herm}{\op{Herm}}
\newcommand{\der}{\mathcal{D}}

\newcommand{\sis}{\sigma_{\op{s}}}
\newcommand{\sip}{\sigma_{\op{p}}}

\author{Laurent Charles \footnote{Sorbonne Universit{\'e}s, UPMC Univ Paris 06, UMR 7586, Institut de Math{\'e}matiques de Jussieu-Paris Rive Gauche, F-75005, Paris, France} } 
\title{Subprincipal symbol for Toeplitz operators}

\begin{document}

\maketitle

\begin{abstract} We establish some subprincipal estimates for Berezin-Toeplitz operators on symplectic compact manifolds. From this, we construct a family of subprincipal symbol maps and we prove that these maps are the only ones satisfying some expected conditions.

{\bf Keywords}:  Berezin-Toeplitz operators, subprincipal symbol, Quantization, K{\"a}hler manifolds, symplectic manifolds. 
{\bf MSC}: 53D50, 81S10
\end{abstract}

Toeplitz operators on symplectic manifolds are similar to semiclassical pseu\-do-differential operators on cotangent bundles. In particular, we can extend to Toeplitz operators the usual techniques to describe the spectrum of pseudo-differential operators, as for instance the trace formula \cite{BPU2} or the Bohr-Sommerfeld conditions \cite{oim_lag}, \cite{oim_demi}, \cite{Yohann}. Two important ingredients in these semi-classical results are the principal and subprincipal symbols of an operator. One issue is that there is no obvious definition for the subprincipal symbol of a Toeplitz operators, cf. as instance \cite{BPU2} or \cite{Bo2}. 

In this paper, we introduce some axioms that a subprincipal symbol should satisfy to our point of view. Then we construct all the subprincipal symbol satisfying these axioms. We work in the general setting introduced in \cite{oim_symp_quant} for the quantization of symplectic manifolds. General reference for the Toeplitz operators are  \cite{BoGu}, \cite{MaMa}, \cite{BoMeSc}, \cite{Gu}. Our construction is inspired from our previous work on K{\"a}hler manifolds, in particular \cite{oim_demi} and \cite{oim_eq}, and uses in an essential way the metaplectic correction.  

\section{Statement of the results} 

To start with, consider a compact K{\"a}hler manifold $M$ equipped with a holomorphic Hermitian line bundle $L$ and with a holomorphic Hermitian vector bundle $A$. Assume that $L$ is positive so that the Chern curvature of $L$ is $\frac{1}{i} \om $ where $\om \in \Om^2 (M, \R)$ is symplectic. For any $k \in \N$, let $\mathcal{H}_k$ be the space of holomorphic sections of $L^k \otimes A$. 

More generally, let $(M, \om)$ be any symplectic compact manifold such that $\frac{1}{2 \pi } [\om]$ is integral. Introduce a Hermitian line bundle $L \rightarrow M$ with a Hermitian connection $\nabla $ of curvature $\frac{1}{i} \om$, a Hermitian vector bundle $A$ and an almost complex structure $j$ compatible with $\om$. Then consider a family of finite dimensional subspace 
$$ \mathcal{H}_k \subset \Ci ( M , L^k \otimes A) , \qquad k \in \N^* $$ consisting of almost-holomorphic sections in the sense of \cite{oim_symp_quant}. In the sequel, we will refer to the case where $M$ is K{\"a}hler, $L$ and $A$ holomorphic, $\nabla$ is the Chern connection and $\mathcal{H}_k$ consists of holomorphic sections, as the K{\"a}hler case. Observe that $\mathcal{H}_k$ has a natural scalar product obtained by integrating the pointwise scalar product against the Liouville measure $\om^n / n!$.

To these data is associated a star-algebra $\To$ consisting of the so-called Berezin-Toeplitz operators or more briefly Toeplitz operators. The definition will be recalled later, let us just say for now that a Toeplitz operator $T$ is an endomorphism family $(T_k : \mathcal{H}_k \rightarrow \mathcal{H}_k,  k \in \N)$. The product of $\To$ is the usual composition of endomorphisms and the involution is the Hermitian adjoint. 
A very important fact is the existence of a natural star-algebra morphism $\sip : \To \rightarrow \Ci  ( M, \op{End} A) $ which is onto and with kernel 
$$ k^{-1} \To := \{ (k^{-1} T_k) / (T_k) \in \To \} = \{ (T_k) \in \To / \| T_k \| = \bigo (k^{-1}) \}.$$    
For any $T \in \To$, $\sip (T) $ is called the principal symbol of $T$. 
Furthermore, if $T$ and $S$ are two Toeplitz operators with scalar valued principal symbol $f$ and $g$ respectively, then $ik [T, S]$ is a Toeplitz operator with principal symbol $\{ f, g\}$, where $\{ \cdot, \cdot \}$ is the Poisson bracket corresponding to $\om$. 

 Let $\Tosc$ be the subalgebra of $\To$ consisting of operators with a scalar valued principal symbol. We say that a linear map $\sis : \Tosc \rightarrow \Ci ( M , \op{End} A)$ is a {\em subprincipal symbol map} if it satisfies the following conditions: 
\begin{enumerate} [label=\roman{*}),ref=\roman{*})]
\item \label{item:1} for any $T \in \To$,  $\sis ( k^{-1}T) = \sip (T)$,
\item \label{item:2} for any $T \in \Tosc$, $ \sis ( T^* )= \sis (T) ^*$, 
 \item \label{item:3} for any $T, S \in \Tosc$, $\sis (TS) = \sip(T) \sis (S) + \sis (T) \sip (S) + \tfrac{1}{2i} \{ \sip (T) , \sip (S) \}$,
\end{enumerate}
With such a map $\sis$, we can control the Toeplitz operators with a scalar principal symbol up to $k^{-2} \To$. More precisely, by \ref{item:1} and \ref{item:2}, the map 
$$ \si := \sip + \hb \sis : \Tosc \rightarrow  \Ci (M) \oplus \hb \; \Ci ( M , \op{End} A)$$
is real, onto with kernel $k^{-2} \Tosc$.

As a first attempt to construct a subprincipal symbol map, we can consider the contravariant symbols. Let $\Pi_k$ be the orthogonal projector of $\Ci ( M, L^k \otimes A)$ onto $\mathcal{H}_k$. Recall that for any $f \in \Ci (M, \op{End} A)$, $( \Pi_k f, \; k \in \N)$ is a Toeplitz operator with principal symbol $f$. So by the properties of the principal symbol recalled above, for any Toeplitz operator $T \in \To$, we have $ T= \Pi f \mod k^{-1} \To$ with $f = \sip (T)$. We can now set $\sis^{\op{c}} (T) := \sip ( k ( T - \Pi f ) )$, so that we have 
\begin{gather}  \label{eq:secondordre} 
T = \Pi f + k^{-1} \Pi g \mod k^{-2} \To \qquad \text{ with } f = \sip(T) , \; g = \sis ^{\op{c}} (T) 
\end{gather}
This defines a map $\sis^{\op{c}}$ which satisfies \ref{item:1}, \ref{item:2} but not \ref{item:3}. For instance, in the K{\"a}hler case with $A$ the trivial line bundle, we proved in \cite{oim_op} that 
$ \sis^{\op{c}} (ST) = \sip(T) \sis ^{\op{c}}(S) + \sis ^{\op{c}}(T) \sip (S) + \hb B ( \si(S) ,\si(T)) $
where in complex coordinates $(z^i)$, 
$$B( f,g) = - \sum G^{jk} \partial_{z_j} f . \partial_{\con{z}_k } g \quad \text{ if } \quad \om = i \sum G_{j,k} dz_j \wedge d\con{z}_k .$$ 
As expected, the antisymmetric part of $B$ is $\frac{1}{2i}$ times the Poisson bracket. But the symmetric part does not vanish. Nevertheless, setting 
\begin{gather} \label{eq:contsub}
\sis (T) = \sis ^{\op {c} } (T) + \frac{\hb}{2}  \Delta  \sip (T) ,
\end{gather}
where $\Delta = \sum  G^{jk} \partial_{z_j} \partial_{\con{z}_k}$, we obtain a subprincipal symbol map, which was introduced in \cite{oim_lag} to state the Bohr-Sommerfeld conditions. Equation (\ref{eq:contsub}) may be viewed as a generalization to K{\"a}hler manifolds of the formula giving the subprincipal term of the Weyl symbol in terms of the anti-Wick symbol. Still in the K{\"a}hler case, this approach can be generalized to any holomorphic vector bundle $A$ by using the expression for $B$ obtained in \cite{MaMa2012}.  In the general symplectic case, we only know that $B$ is a bidifferential operator. Using some standard argument in deformation quantization, this is enough to deduce the existence of a subprincipal symbol map, cf. Proposition \ref{prop:affine}. But we do not have an explicit formula defining this symbol.

Actually, there is a direct way to define an explicit subprincipal symbol. The construction is easier in the case where the canonical bundle $K = \wedge ^{n,0} T^*M$ has a square root $(\delta, \varphi)$, what we assume from now on. Here $\delta$ is a Hermitian line bundle over $M$ and $\varphi$ is an isomorphism from $\delta^2$ to $K$. It is known that such a square root exists if and only if $M$ has a spin structure if and only if the second Stiefel-Withney class of $M$ vanishes. 

 Set $B := A \otimes \delta^{-1}$ so that $A = B \otimes \delta$. 
Choose a Hermitian connection $\nabla^B$ of $B$.  Denote by $\nabla^k$ the connection of $L^k \otimes B$ induced by $\nabla $ and $\nabla^B$.
 For any vector field $X$ of $M$, let $D_X^K$ be the derivative of $\Ci (M, K)$ in the direction of $X$ given by $$ D_X ^K ( s) = p (\mathcal{L}_X s) , \qquad \forall s \in \Ci ( M , K),$$
where $p$ is the projection from $\wedge ^n (T^*M \otimes \C)$ onto $K$ with kernel $\wedge^{n-1, 1} T^*M \oplus \ldots \oplus \wedge^{0,n} T^*M$. Introduce now the derivative $D_X^\delta$ of $\Ci(M, \delta)$ in the direction of $X$ such that 
$$ D_X ^K ( \varphi(s^2) ) = 2 \varphi ( s \otimes D_X^\delta s ) , \qquad \forall s \in \Ci ( M, \delta). $$
Finally for any $f \in \Ci (M)$, consider the operator 
$$ Q_k (f) = \Pi_k \bigl( f + \tfrac{i}{k} \bigr( \nabla^{k}_X \otimes \op{id} + \op{id} \otimes D^\delta_X)\bigr) : \mathcal{H}_k \rightarrow \mathcal{H}_k .$$ 
where $X$ is the Hamiltonian vector field of $f$, that is $df = \om ( X, \cdot) $.

\begin{theo} \label{theo:intro_subpr}
For any Hermitian connection $\nabla^B$ of $B$, we have 
\begin{enumerate} 
\item \label{item:1theoint} For any $f \in \Ci (M)$, $Q(f) = \bigl(Q_k(f)\bigr)$ is a Toeplitz operator with principal symbol $f$. 

\item \label{item:2theoint} For any $f $ and $g \in \Ci (M)$, we have
$$ Q_k ( f) Q_k ( g) = Q_k ( fg)  + \tfrac{1}{2ik} Q_k ( \{ f, g \} ) \mod k^{-2} \To $$
and 
\begin{gather} \label{eq:com1}
 [ Q_k ( f) , Q_k ( g) ] = \tfrac{1}{ik} Q_k ( \{ f, g \} ) + \tfrac{1}{i k^2} \Pi_k( R(X,Y)) \mod k^{-3} \To , 
\end{gather}
where $X$ and $Y$ are the Hamiltonian vector fields of $f$ and $g$ respectively, $iR = \Theta (B) \in \Om^2 ( M, \op{End} B)$ is the curvature of $\nabla^B$. 

\item \label{item:3theoint} For any $f \in \Ci (M)$ and Toeplitz operator $S$, $(ki [ Q_k (f) , S ] )$ is a Toeplitz operator with principal symbol 
\begin{gather} \label{eq:com2} 
 \sip ( ki [ Q_k (f) , S ] ) = - \nabla_X^B (\sip (S)).
\end{gather} 
where $X$ is the Hamiltonian vector field of $f$. 
\end{enumerate}
\end{theo}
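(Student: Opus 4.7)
The plan is to reduce all three items to the standard composition law for Toeplitz operators of the form $\Pi_k M \Pi_k$ with $M = m_0 + k^{-1} m_1 + \ldots$, combined with the key algebraic fact that, on almost-holomorphic sections, the $(0,1)$-part of $\nabla^k$ is negligible and the $(1,0)$-part has an explicit structure dictated by the local K\"ahler potential.

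For item (\ref{item:1theoint}) I split $Q_k(f) = \Pi_k f + \tfrac{i}{k}\Pi_k(\nabla^k_{X} + D^\delta_{X})$ with $X = X_f$. The first summand is already the Toeplitz operator of symbol $f$. For the second, the Bergman-kernel asymptotics show that the $(0,1)$-part of $\nabla^k_X$ applied to sections of $\mathcal{H}_k$ produces $\bigo(k^{-\infty})$ after projection, while the $(1,0)$-part is, modulo the $\nabla^B$-connection form, the sum of a derivative and a multiplication by $-k\partial_j\varphi$ (with $\varphi$ a local K\"ahler potential) which cancel after projection by the standard Bergman-kernel identity. The surviving contribution is a bounded Toeplitz operator built from the connection form of $\nabla^B$ and from $D^\delta_X$, so $Q_k(f) - \Pi_k f \in k^{-1}\To$, and $\sip(Q(f)) = f$.

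For item (\ref{item:2theoint}), writing $L_X := \nabla^k_X + D^\delta_X$, I expand $Q_k(f) Q_k(g)$ into the four pieces $\Pi_k f \Pi_k g$, $\tfrac{i}{k}\Pi_k f \Pi_k L_{X_g}$, $\tfrac{i}{k}\Pi_k L_{X_f}\Pi_k g$, and $-\tfrac{1}{k^2}\Pi_k L_{X_f}\Pi_k L_{X_g}$. The first piece is $\Pi_k(fg)$ up to the usual Toeplitz star-product correction involving the bidifferential $B$ of (\ref{eq:contsub}); the two cross-terms, simplified by the Leibniz identity $[L_X, h] = X(h)$ for a scalar function $h$, reassemble into a half-Poisson bracket; and the double term is already $\bigo(k^{-2})$. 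Reorganising via the explicit form of $Q_k(fg)$ and $Q_k(\{f,g\})$ from item (\ref{item:1theoint}) yields the star-product identity modulo $k^{-2}\To$. For the commutator formula (\ref{eq:com1}) I carry the computation one order further: the antisymmetric part of the double term involves $[\nabla^k_{X}, \nabla^k_{Y}] = \nabla^k_{[X,Y]} + \op{curv}(X,Y)$; the $L^k$-part of the curvature, $\tfrac{k}{i}\om$, builds the leading $\tfrac{1}{ik}Q_k(\{f,g\})$, the $\nabla^B$-part $iR$ yields the stated $\tfrac{1}{ik^2}\Pi_k R(X,Y)$ correction, and the scalar $\delta$-curvature cancels in the antisymmetrization against the corresponding symmetric contributions.

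For item (\ref{item:3theoint}), writing $S = \Pi_k s + \bigo(k^{-1})\To$ with $s = \sip(S)$, I split $ik[Q_k(f), S] = ik[\Pi_k f, \Pi_k s] + i^2[\Pi_k L_{X_f}, \Pi_k s] + \bigo(k^{-1})$. The second bracket reduces, by the Leibniz rule and the fact that $s \in \op{End} A = \op{End} B$ acts only on the $B$-factor and hence commutes with $D^\delta_X$, to $-\Pi_k [\nabla^k_{X_f}, s] = -\Pi_k \nabla^B_{X_f}(s)$ (extending $\nabla^B$ to $\op{End} B$). The first bracket contributes via the Toeplitz star product a scalar-vs-End-valued Poisson-type term whose frame-dependent piece exactly cancels the analogous piece of $\nabla^B_{X_f}(s)$, leaving the intrinsic principal symbol $-\nabla^B_{X_f}\sip(S)$. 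The main obstacle throughout is the subprincipal bookkeeping in item (\ref{item:2theoint}), where one must simultaneously track three different curvatures ($L^k$, $B$, $\delta$) at the correct orders in $k$ and verify that only $\Theta(B)$ survives the antisymmetrization in (\ref{eq:com1}).
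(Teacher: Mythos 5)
Your overall strategy---expand $Q_k(f)Q_k(g)$ into four pieces and track orders in $k$---is close in spirit to the paper's, but the order bookkeeping in item 2 has a genuine gap. The paper first removes the middle projector by writing $Q_k(f)Q_k(g)-\Pi_k\bigl(f+\tfrac{i}{k}P_{f,k}\bigr)\bigl(g+\tfrac{i}{k}P_{g,k}\bigr)\Pi_k=\Pi_k\bigl[f+\tfrac{i}{k}P_{f,k},\Pi_k\bigr]\bigl[g+\tfrac{i}{k}P_{g,k},\Pi_k\bigr]\Pi_k$, which lies in $k^{-2}\To$ because each commutator loses a full order (Lemma \ref{lem:2}). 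After this reduction the double term $-k^{-2}\Pi_kP_{f,k}P_{g,k}\Pi_k$ (no middle projector) is \emph{not} negligible: by Lemma \ref{lem:3} it contributes $-\tfrac{i}{k}\Pi_k\,\om(X^{0,1},Y^{1,0})\,\Pi_k$ at order $k^{-1}$, and this is exactly what cancels the term $\tfrac{i}{k}df(Y^{1,0})$ produced by the derivation identity $P_{fg,k}=fP_{g,k}+gP_{f,k}+df(Y^{1,0})+\tfrac12 X.g$, which in turn rests on the metaplectic normalization $D_{fX}=fD_X+\tfrac12 df(X^{1,0})$---an ingredient you never invoke, although it is the reason the symmetric part of the product comes out as exactly $Q_k(fg)$. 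In your version, where the middle projector is kept, the double term is indeed $\bigo(k^{-2})$, but the same order-$k^{-1}$ contributions then reappear in the discrepancies $\Pi_kf\Pi_kP_{g,k}\Pi_k-\Pi_kfP_{g,k}\Pi_k$ and $\Pi_kf\Pi_kg\Pi_k-\Pi_k fg\Pi_k$; to see them cancel you would need the explicit bidifferential $B_1$ in the general almost-K\"ahler setting (not the K\"ahler-specific formula around (\ref{eq:contsub})), and you do not supply it.

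There is a second gap in the commutator formula. The commutator of the metaplectic derivatives is not just $D_{[X,Y]}$ plus curvature: by Lemma \ref{lem:com} and Proposition \ref{prop:met_com}, $[D_X,D_Y]=D_{[X,Y]}+iR(X,Y)+\tfrac12 B_j(X,Y)$, where $B_j(X,Y)$ is a nonzero function that is \emph{already antisymmetric} in $X,Y$, so it cannot ``cancel in the antisymmetrization against the corresponding symmetric contributions'' as you assert. In the paper it is eliminated by a different mechanism: the antisymmetrized error $S(f,g,k)-S(g,f,k)$ is computed at second order in the symbol calculus of $\Fa_4$ (Lemma \ref{lem:6}) and its symbol is exactly $\tfrac{\hb^2}{2}B_j(X,Y)$. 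Missing this cancellation leaves a spurious scalar term in (\ref{eq:com1}). Finally, your argument for item \ref{item:1theoint} leans on holomorphicity of sections and a local K\"ahler potential, whereas the theorem is stated for almost-holomorphic sections on a general symplectic manifold; the correct general input is simply that $\tfrac{i}{k}P_{X,k}\Pi_k\in\Fa_1$ with symbol $\om(\cdot,X^{1,0})\,1_A$ (Lemma \ref{lem:1}). Item \ref{item:3theoint} is quoted in the paper from Theorem 5.8 of \cite{oim_symp_quant}; your sketch for it is plausible in outline but suffers from the same imprecision about the projector insertions.
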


In this statement, we have identified $\op{End} A$ and $\op{End} B$ by using that $\delta$ is a line bundle so that $\op{End }(\delta)$ is canonically isomorphic with the trivial line bundle. Furthermore, we let the covariant derivative $\nabla^B$ act on sections of $\op{End} B$ in the usual way. Observe that in the case where $B$ is a line bundle  $\nabla_X^B (\sip (S)) = X. \sip (S) $ so that Equation (\ref{eq:com2}) becomes $ \sip ( ki [ Q_k (f) , S ] ) = \{ f, \sip (S) \} $. 

We can now defined a subprincipal symbol map  $\sis : \Tosc \rightarrow \Ci ( M , \op{End} A)$ by $\sis (T) := \sip ( k ( T - Q ( \sip (T))))$. In other words, for any Toeplitz operator $T \in \Tosc$, we have 
\begin{gather} \label{eq:sissis}
  T = Q(f) + k^{-1} \Pi g \mod k^{-2} \To, \qquad \text{ with } f = \sip(T) , \; g = \sis  (T) 
\end{gather}
By Theorem \ref{theo:intro_subpr}, $\sis$ satisfies \ref{item:1}, \ref{item:2} and \ref{item:3}. Furthermore, by (\ref{eq:com1}) and (\ref{eq:com2}), for any $S, T \in \Tosc$, 
\begin{gather} \label{eq:siscom}
 \sis ( ik [ T,S ] ) = R(X,Y) - \nabla^B _X \sis (S) + \nabla^B_Y \sis (T) + i [ \sis (T) , \sis (S) ]
\end{gather}
where $X$ and $Y$ are the Hamiltonian vector fields of $\sip (T)$ and $\sip (S)$ respectively. 

Observe that $\sis$ is not uniquely defined, since it depends on the choice of $\nabla^B$. Actually, the space of Hermitian connections of $B$ is an affine space directed by $\Om^1 ( M , \herm B)$ where $\herm B$ is the bundle of Hermitian endomorphisms of $B$. As we will see, the space of subprincipal symbol maps is an affine space directed by $\Ci ( M , TM \otimes \herm A)$, the action being given by $ (V+ \sis )(T) = \sis(T) + V. \sip (T)$.

\begin{prop} \label{prop:tous}
The map from the space of connections of $B$ to the space of subprincipal symbol maps of $\To$, sending $\nabla^B$ to $\sis$ so that (\ref{eq:sissis}) is satisfied, is an isomorphism of affine vector spaces. Here the corresponding vector spaces $\Om^1 ( M , \herm B)$ and $\Ci ( M , TM \otimes \herm A)$ are identified through the symplectic duality $T^*M \simeq TM$ and the natural isomorphism $\herm A \simeq \herm B \otimes \herm \delta \simeq \herm B$ considered above. 
\end{prop}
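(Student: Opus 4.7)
My plan is to (a) describe the affine structure on the set $\mathfrak{S}$ of subprincipal symbol maps, (b) compute how $\sis$ varies with $\nabla^B$, then (c) combine the two.

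For (a), let $\sis_1, \sis_2 \in \mathfrak{S}$ and set $\Delta = \sis_1 - \sis_2$. Applied to both, axiom \ref{item:1} gives $\Delta(k^{-1}T) = 0$ for every $T \in \To$, so $\Delta$ vanishes on $\Tosc \cap k^{-1}\To$. Since $\sip|_{\Tosc} : \Tosc \to \Ci(M)$ is surjective with this subspace as kernel, $\Delta$ factors as $\Delta = \bar\Delta \circ \sip$ for a unique linear $\bar\Delta : \Ci(M) \to \Ci(M, \op{End} A)$. The Poisson-bracket term in axiom \ref{item:3} cancels in the difference, leaving the Leibniz rule
$$ \bar\Delta(fg) = f\bar\Delta(g) + g\bar\Delta(f), \qquad f, g \in \Ci(M). $$
By the standard correspondence between derivations of $\Ci(M)$ valued in a $\Ci(M)$-module of sections and first-order operators with vector-field symbol, there is a unique $V \in \Ci(M, TM \otimes \op{End} A)$ with $\bar\Delta(f) = V \cdot f$. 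Axiom \ref{item:2}, which reads $\bar\Delta(\bar f) = \bar\Delta(f)^*$, restricts $V$ to $TM \otimes \herm A$. Conversely, I would verify directly that for any $V \in \Ci(M, TM \otimes \herm A)$ the map $\sis_2 + V \cdot \sip$ still satisfies \ref{item:1}--\ref{item:3}, so $\mathfrak{S}$ is an affine space over $\Ci(M, TM \otimes \herm A)$.

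For (b), perturb $\nabla^B$ to $\tilde\nabla^B = \nabla^B + i\beta$ with $\beta \in \Om^1(M, \herm B)$, the standard parametrization of Hermitian perturbations by Hermitian-valued $1$-forms. Since $D^\delta_X$ depends only on $j$, only the term $\nabla^k_X \otimes \op{id}$ in $Q_k(f)$ is affected, and using $A = B \otimes \delta$ together with the canonical trivialization of $\op{End}(\delta)$,
$$ \tilde Q_k(f) = Q_k(f) - \tfrac{1}{k}\,\Pi_k\bigl(\beta(X_f)\bigr), $$
where $X_f$ is the Hamiltonian vector field of $f$. Writing (\ref{eq:sissis}) for both $\sis$ and $\tilde\sis$ and subtracting yields $\tilde\sis(T) - \sis(T) = \beta(X_{\sip(T)})$. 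Under the symplectic duality $\xi(\cdot) = \om(\xi^\sharp, \cdot)$, one has $\beta(X_f) = -\beta^\sharp \cdot f$, so the linear part of $\nabla^B \mapsto \sis$ is (up to a sign absorbed in the choice of $\sharp$) the identification $\Om^1(M, \herm B) \simeq \Ci(M, TM \otimes \herm A)$ stated in the proposition.

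For (c), the linear part being a bijection of vector spaces implies that the affine map $\nabla^B \mapsto \sis$ is an isomorphism of affine spaces. The main obstacle is step (a): extracting from the axioms that the difference of two subprincipal symbol maps is given by a first-order differential operator with coefficients in $\herm A$. Once this is settled, step (b) is a short direct computation from the definition of $Q_k$ using Theorem \ref{theo:intro_subpr}, and step (c) is immediate.
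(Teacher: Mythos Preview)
Your proposal is correct and follows essentially the same route as the paper: step (a) is exactly the argument the paper gives for Proposition~\ref{prop:affine}, and steps (b)--(c) amount to the content of the Corollary following Theorem~\ref{theo:quant} (combined with Remark~\ref{rem:demi} identifying Hermitian connections on $B$ with elements of $\der$). The only difference is that you compute the linear part $\beta \mapsto \beta(X_{\sip(\cdot)})$ explicitly, whereas the paper merely asserts that the map $D \mapsto \sis^D$ is an affine morphism with the expected linear part; your version is in fact slightly more complete on this point.
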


\begin{rem} 
The case where $B$ is trivial, so that $\mathcal{H}_k \subset \Ci ( M ,L^k \otimes \delta)$, is usually called the quantization with metaplectic correction. There is a natural and very particular choice for the connection of $B$: $\nabla^B = d$. Then (\ref{eq:com1}) simplifies  $$ [  Q_k ( f) , Q_k ( g) ] = \tfrac{1}{ik} Q_k ( \{ f, g \} )  \mod k^{-3} \To $$ 
and the corresponding subprincipal symbol satisfies: 
\begin{gather} \label{eq:siscomsimp}
\sis ( ik [ T,S ] ) = \{ \sip ( T) , \sis (S) \} + \{ \sis (T) , \sip (S) \}.
\end{gather} 
This situation can be compared with the one of pseudo-differential operators acting on half-densities. In that case, there is a well-defined subprincipal symbol which satisfies (\ref{eq:siscomsimp}). Let us note also that according to \cite{Bo2}, only a map satisfying Equation (\ref{eq:siscomsimp}) deserves the name of subprincipal symbol. 
\qed \end{rem}

\begin{rem} \label{rem:clas}
Equation (\ref{eq:com1}) is relevant to classify the algebras $\To$ corresponding to various choices of $A$. For instance, assume that $A$ is a line bundle so that $R \in \Om^2( M , \R)$. Then the cohomology class of $R$ does not depend on the choice of the subprincipal symbol. Actually, $i R$ being the curvature of $\nabla^B$, $$ \tfrac{1}{2\pi} [ R] = c_1 (B) = c_1 (A) + \tfrac{1}{2} c_1 ( M).$$ Assume now that we have another line bundle $A'$ on $M$ with corresponding spaces $\mathcal{H}_k' \subset \Ci ( M , L^k \otimes A')$ and Toeplitz algebra $\To'$. We may ask whether there exists a star-algebra morphism $ \Phi: \To \rightarrow \To'$  such that $\Phi ( k^{-1} T) =  k^{-1} \Phi (T)$ and $\sip  ( \Phi ( T) ) = \sip (T)$ for any $T \in \To$. Observe that if $\Phi$ is such a morphism and $\sis'$ is a subprincipal symbol map of $\To'$, then $\sis  := \sis' \circ   \Phi $ is a subprincipal symbol map of $\To$.  So as a consequence of Equation (\ref{eq:com1}), a necessary condition for $\Phi$ to exist is that $A$ and $A'$ have the same Chern class. In \cite{oim_eq}, we prove that this condition is also sufficient in the K{\"a}hler case. We plan to extend these results to the general symplectic case in a next paper. 
\qed \end{rem}

\begin{rem} \label{rem:Fedosov} 
Recall that the star products of $(M, \om)$ are classified up to equivalence by their Fedosov class which is an element of $H^2 (M , \C) [[\hb]]$, cf. \cite{Fe}. In the case where $A$ is a line bundle, the product $\star _{\op{cont}}$ of contravariant symbols of Toeplitz operators is a star product. By Equation (\ref{eq:com1}), the first coefficient of the Fedosov class of $\star_{\op{cont}}$ is equal to $  c_1 (A) + \tfrac{1}{2} c_1 ( M)$.  
\qed \end{rem}

\begin{rem} 
In the K{\"a}hler case, there is a particular choice for the connection of $B$, namely the Chern connection. Doing this choice and assuming that $A$ is a line bundle, we recover the subprincipal symbol defined in equation (\ref{eq:contsub}), cf. \cite{oim_demi}. In this case, Equations (\ref{eq:contsub}) and (\ref{eq:com1}) have been proved in \cite{oim_eq}. However, the proof in \cite{oim_eq} was rather indirect and based on the morphisms which we alluded to in Remark \ref{rem:clas}.  The proof we propose in this paper is much simpler.  
\qed \end{rem}

Theorem \ref{theo:intro_subpr} and Proposition \ref{prop:tous} generalize to the case where there is no  half-form bundle.  The difficulty as we will see is to define the convenient operators $Q_k( f)$ in this case and to understand what replaces the choice of the connection on $B$. 
 
The paper is organized as follows. In Section \ref{sec:toeplitz-operators}, we recall some basic facts on Toeplitz operators. In Section \ref{sec:subprincipal-symbols}, we start the study of subprincipal symbols and we go as far as possible without using the operators $Q_k (f)$. We will see that we can deduce Equation (\ref{eq:siscom}) from almost nothing, but without computing explicitly $R$ and $\nabla^B$.  In section \ref{sec:half-form-comp}, we introduce some material related to half-form bundles. In section \ref{sec:quantization-map}, we define the operator $Q_k(f)$ and state the theorem generalizing Theorem \ref{theo:intro_subpr} in the absence of half-form bundle. Sections \ref{sec:schw-kern-toepl} and \ref{sec:proof} are devoted to the proof of this result.

\section{Toeplitz operators} \label{sec:toeplitz-operators}

Consider a compact symplectic manifold $M$ equipped with a prequantum bundle $L \rightarrow M$. Recall that $L$ is a Hermitian line bundle with a connection of curvature $\frac{1}{i} \om$. Let $A $ be any Hermitian vector bundle. Let $j$ be an almost complex structure of $M$ compatible with $\om$. Consider a family 
$$\mathcal{H}= ( \mathcal{H}_k \subset \Ci ( M , A \otimes L^k) , \; k \in \N)$$ of finite dimensional subspaces. Assume that the orthogonal projector $\Pi_k$ onto $\mathcal{H}_k$ belongs to the algebra $\Fa_0$ introduced in Section \ref{sec:algebra-fa_0}. 

 A {\em Toeplitz operator}  is any family $(T_k : \mathcal{H}_k \rightarrow \mathcal{H}_k, \; k \in \N)$ of operators of the form 
\begin{gather} \label{eq:def_Toep}
 T_k = \Pi_k f( \cdot, k ) + R_k , \qquad k \in \N^*
\end{gather}
where $f(\cdot, k )$, viewed as a multiplication operator, is a sequence in $\Ci ( M, \op{End} A )$ admitting an asymptotic expansion $ f_0 + k^{-1} f_1 + \ldots $ for the $\Ci $ topology. Furthermore the norm of $R_k \in \op{End} \mathcal{H}_k$ is in $\bigo ( k^{-\infty}) = \bigcap_N \bigo ( k^{-N})$. 

The following facts are proved in \cite{oim_symp_quant}.  The space $\To$ of Toeplitz operators is a star-algebra with identity $(\Pi_k)$, the product being the usual composition of operators, the involution being the Hermitian adjoint. The symbol map $$\si_{\op{cont}} : \To \rightarrow  \Ci ( M , \op{End} A) [[\hb ]]$$ sending $(T_k)$ into the formal series $ f_0 + \hb f_1 + \ldots $ where the functions $f_i$ are the coefficients of the asymptotic expansion of the multiplicator $f( \cdot , k)$ is well defined. It is onto and its kernel is the ideal consisting of $\bigo ( k^{-\infty})$ Toeplitz operators. More precisely, for any integer $\ell$,
$ \| T_k \| = \bigo ( k^{-\ell})$ if and only if $\si_{\op{cont}} ( T) = \bigo ( \hb^{\ell}) .$ 
According to Berezin terminology, $\si_{\op{cont}} (T) $ is called the {\em contravariant} symbol of $T$. 

The {\em principal} symbol $\sip (T) \in \Ci ( M , \op{End} A)$ is by definition the first coefficient of the contravariant symbol, so $\si_{\op{cont}} (T) = \sip (T) + \bigo ( \hb)$. The principal symbol map $$\sip: \To \rightarrow \Ci (M, \op{End} A)$$ is onto and $\sip (T) = 0$ if and only if there exists $S \in \To$ such that $T = k^{-1} S$. For any Toeplitz operators $T, S \in  \To $ with principal symbols $f$ and $g$,  we have $ \sip ( T S) = f.g$. If $f$ and $g$ are scalar valued, then $ik [T,S] \in \To$ and 
\begin{gather}\label{eq:commutator}
 \sip   (ik [T,S] ) = \{ f, g \},
\end{gather}
where $\{ \cdot, \cdot \}$ is the Poisson bracket of $M$. 
Denoting by $\| T_k \|$ the operator norm of $T_k$ corresponding to the scalar product of $\mathcal{H}_k \subset \Ci ( M , L^k \otimes A)$, we have
$$  \| T_k \|  = \sup_{y \in M} | \sip (T) (y) | + \bigo ( k^{-1}) $$ 
where for any $y \in M$, $| \sip (T) (y) |$ is the operator norm of $\sip (T) (y) \in \op{End} A_y $. 
Consequently, $\sip(T) =0$ if and only if $\| T_k \| = \bigo ( k^{-1})$. 

As a last property, the full product of contravariant symbols has the following form: if  $\si_{\op{cont}} ( T) = \sum \hb^\ell f_\ell$ and $\si_{\op{cont}} ( S) = \sum \hb^\ell g_{\ell} $, then 
\begin{gather} \label{eq:prod_cont_complet} 
 \si_{\op{cont}} ( T S) = \sum_{\ell} \hb^{\ell} \sum_{\ell = p+q+ r} B_r ( f_p , g_q)  
\end{gather}
where $B_0 ( f,g) = fg$ and for any $r$, $B_r: \Ci ( M ,\op{End} A) \times \Ci ( M , \op{End} A) \rightarrow \Ci ( M, \op{End} A)$ is a bilinear local operator, cf. \cite{oim_symp_quant}.

\section{Subprincipal symbols} \label{sec:subprincipal-symbols}

Denote by $\Tosc$ the set of Toeplitz operator $T \in \To$ with a scalar principal symbol.
Let $\synorsp$ be the set of linear map $\sis : \Tosc \rightarrow \Ci ( M , \op{End} A)$ satisfying the following conditions: 
\begin{enumerate} [label=\roman{*}),ref=\roman{*})]
\item \label{item:def}  for any $T \in \To$,  $\sis ( k^{-1}T) = \sip (T)$,
\item \label{item:adj}  for any $T \in \Tosc$, $ \sis ( T^* )= \sis (T) ^*$, 
 \item \label{item:prod} for any $T, S \in \Tosc$, $\sis (TS) = \sip(T) \sis (S) + \sis (T) \sip (S) + \tfrac{1}{2i} \{ \sip (T) , \sip (S) \}$,
\end{enumerate}
We will first prove that $\synorsp$ is not empty.
Define the contravariant subprincipal symbol map $ \sis^{\op{c}} : \Tosc \rightarrow  \Ci ( M , \op{End} A) $ as follows:
\begin{gather} \label{eq:sis_cont}
\si_{\op{cont}} ( T) = \sip(T) + \hb \sis^{\op{c}} (T)  + \bigo (\hb^2)
\end{gather} 
Then $\sis^{\op{c}}$ satisfies \ref{item:def}, \ref{item:adj} but not \ref{item:prod}. Actually, by (\ref{eq:prod_cont_complet}), we have
\begin{gather} \label{eq:defB}
 \sis^{\op{c}} (TS) = \sip(T) \sis^{\op{c}} (S) + \sis^{\op{c}} (T) \sip (S) + B( \sip (T) , \sip (S) )
\end{gather}
where $B$ is a bidifferential operator from $\Ci ( M ) \times \Ci ( M )$ to $\Ci ( M, \op{End} A)$. By (\ref{eq:commutator}), the antisymmetric part of $B$ is equal to $\frac{1}{2i}$ times the Poisson bracket, so 
\begin{gather} \label{eq:Bsym}
 B(f,g) = B^{\op{s}} (f,g) + \tfrac{1}{2i} \{ f , g \} 
\end{gather}
where $B^{\op{s}}$ is symmetric. We will modify $\sis^{\op{c}}$ to get a subprincipal symbol satisfying \ref{item:prod}. The method we follow is based on standard lemmas in Deformation quantization. 

\begin{prop} \label{prop:Hoschild}
Let $B^{\op{s}} : \Ci ( M) \times \Ci ( M ) \rightarrow \Ci ( M , \op{End} A)$ be any bidifferential symmetric operator satisfying 
\begin{gather} \label{eq:3}
 B ^{\op{s}}( f, g) h + B^{\op{s}} ( fg, h) = B^{\op{s}}( f, gh) + f B^{\op{s}}( g,h) , \quad \forall f,g,h \in \Ci ( M ) .
\end{gather}
Then there exists a differential operator $Q : \Ci ( M) \rightarrow \Ci ( M, \op{End} A) $ such that for any $f, g \in \Ci (M)$, we have $B^{\op{s}}( f, g) = fQ (g) + Q(f) g - Q( fg)$. 
\end{prop}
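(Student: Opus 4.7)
The plan is to pass to symbols, solve the resulting additive identity coefficient by coefficient using the cocycle hypothesis, and then globalize by a partition of unity.

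On a coordinate chart $U\subset M$, write $B^{\op{s}}(f,g)=\sum_{\alpha,\beta}B^{\alpha,\beta}(x)(\partial^\alpha f)(\partial^\beta g)$ with finitely many nonzero $\op{End}A$-valued smooth coefficients, and form the symbol $b(x,\xi,\eta):=\sum B^{\alpha,\beta}(x)\,\xi^\alpha\eta^\beta$. Seek $Q$ of the form $Q(f)=\sum_\gamma Q^\gamma(x)\,\partial^\gamma f$ with symbol $q(x,\xi):=\sum Q^\gamma(x)\xi^\gamma$. Testing the target identity $B^{\op{s}}(f,g)=fQ(g)+Q(f)g-Q(fg)$ against the exponentials $f=e^{\langle x,\xi\rangle}$, $g=e^{\langle x,\eta\rangle}$ reduces it to $b(x,\xi,\eta)=q(x,\xi)+q(x,\eta)-q(x,\xi+\eta)$, and testing the cocycle identity (\ref{eq:3}) against three exponentials yields
\[
b(x,\xi,\eta)+b(x,\xi+\eta,\zeta)=b(x,\xi,\eta+\zeta)+b(x,\eta,\zeta);
\]
symmetry of $B^{\op{s}}$ reads $b(x,\xi,\eta)=b(x,\eta,\xi)$.

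To solve the symbol equation, first evaluate the cocycle relation at $\zeta=0$ (resp.\ $\xi=0$) to get $b(x,\xi,0)=b(x,0,0)=b(x,0,\eta)$, forcing $B^{\alpha,0}=B^{0,\beta}=0$ for $\alpha,\beta\neq 0$. Expanding $(\xi+\eta)^\gamma$ by the multinomial theorem and matching coefficients of $\xi^\alpha\eta^\beta$ in the target equation reduces it to $Q^0=B^{0,0}$ together with
\[
Q^{\gamma}(x)=-\tfrac{1}{\binom{\gamma}{\alpha}}\,B^{\alpha,\beta}(x)\qquad(\alpha,\beta\neq 0,\ \alpha+\beta=\gamma),
\]
the coefficients $Q^\gamma$ with $|\gamma|=1$ being free (set to zero). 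The well-definedness of $Q^\gamma$ is the crux: extracting the coefficient of $\xi^a\eta^b\zeta^c$ (all nonzero) in the cocycle relation yields $B^{a+b,c}\binom{a+b}{a}=B^{a,b+c}\binom{b+c}{b}$, i.e.\ invariance of $B^{\alpha,\beta}/\binom{\alpha+\beta}{\alpha}$ under the move $(a+b,c)\leftrightarrow(a,b+c)$. Combined with the symmetry $B^{\alpha,\beta}=B^{\beta,\alpha}$, these moves connect any two decompositions of $\gamma$ into two nonzero multi-indices (the case $|\gamma|=2$ being handled by symmetry alone), so $Q^\gamma$ is well defined.

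Finally, globalize by a partition of unity. Pick a locally finite cover $(U_i)$ of $M$ by coordinate charts with subordinate partition of unity $(\chi_i)$; the preceding step provides $Q_i:\Ci(U_i)\to\Ci(U_i,\op{End}A)$ with $B^{\op{s}}|_{U_i}=\delta Q_i$, where we write $\delta Q(f,g):=fQ(g)+Q(f)g-Q(fg)$. Define $Q:=\sum_i\chi_iQ_i$, a differential operator from $\Ci(M)$ to $\Ci(M,\op{End}A)$. A direct calculation gives $\delta(\chi Q_i)(f,g)=\chi\cdot\delta Q_i(f,g)$ for any $\chi\in\Ci(M)$, hence $\delta Q(f,g)=\sum_i\chi_i\,B^{\op{s}}(f,g)=B^{\op{s}}(f,g)$, as required. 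The decisive step is the well-definedness of $Q^\gamma$, which is where the full three-variable cocycle identity (\ref{eq:3}) enters nontrivially; the rest is essentially formal bookkeeping.
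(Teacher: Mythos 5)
Your proof is correct, and it takes a more self-contained route than the paper: the paper does not write out an argument at all, but instead reduces to the case of trivial $A$ and cites a classical Hochschild-cohomology lemma (any bidifferential cocycle with vanishing antisymmetric part is the coboundary of a differential operator). You effectively reprove that lemma directly, with $\op{End} A$-valued coefficients from the start, so the paper's reduction step becomes unnecessary. The mechanics are sound: passing to generating symbols via exponentials correctly converts the target identity into $b(x,\xi,\eta)=q(x,\xi)+q(x,\eta)-q(x,\xi+\eta)$ and (\ref{eq:3}) into the additive cocycle relation; the specializations $\zeta=0$ and $\xi=0$ correctly kill $B^{\alpha,0}$ and $B^{0,\beta}$; and the coefficient identity $B^{a+b,c}\binom{a+b}{a}=B^{a,b+c}\binom{b+c}{b}$ says exactly that $B^{\alpha,\beta}/\binom{\alpha+\beta}{\alpha}$ is invariant under the move $(a+b,c)\mapsto(a,b+c)$, since $\binom{a+b+c}{a+b}\binom{a+b}{a}=\binom{a+b+c}{a}\binom{b+c}{b}$ is the multinomial coefficient on both sides. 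Two points are asserted rather than argued and deserve a line each: (a) applying (\ref{eq:3}) to local exponentials requires the standard locality remark that a multidifferential identity valid for global functions holds for germs; (b) the connectivity of the set of decompositions $\gamma=\alpha+\beta$ ($\alpha,\beta\neq 0$) under your moves plus symmetry should be checked, e.g.\ by showing every decomposition is connected to some $(e_i,\gamma-e_i)$ and that these are mutually connected when $|\gamma|\geq 3$; note that symmetry is genuinely needed already at $|\gamma|=2$ (the Poisson bracket is a non-exact antisymmetric cocycle, so the argument must fail without it, and it does precisely there). The globalization by a partition of unity is clean because $Q\mapsto\delta Q$ is $\Ci(M)$-linear in $Q$. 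What your approach buys is an explicit formula for $Q$ and independence from the reference; what the paper's buys is brevity.
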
 

In the case where $A$ is the trivial line bundle over $M$, this result is a classical lemma in Hochschild cohomology saying that any cocycle with a null antisymmetric part is exact, cf. \cite{BeCaGu} for a short proof. It is easy to deduce Proposition \ref{prop:Hoschild} from this particular case. 
 
Using that the product of Toeplitz operators is associative, a straightforward computations shows that the bidifferential operator $B$ defined by (\ref{eq:defB}) satisfies the equality $ B( f, g) h + B ( fg, h) = B( f, gh) + f B( g,h)$. Using that the Poisson bracket is a derivative with respect to each of his arguments, we conclude that the symmetric part $B^{\op{s}}$ of $B$ satisfies (\ref{eq:3}). Applying Proposition \ref{prop:Hoschild}, we get a differential operator $Q$. Then a straightforward computation shows that  $\sis := \sis ^{\op{c}} + Q \circ \sip$ satisfies  \ref{item:prod}. Since $\sis ^{\op{c}}$ satisfies \ref{item:adj}, $B^{\op{s}}$ is real in the sense that $B^{\op{s}}$ is the complexification of a $\R$-bilinear map $\Ci ( M, \R) \times \Ci ( M , \R) \rightarrow \Ci ( M , \herm A)$, where $\herm A \rightarrow M$ is the vector bundle of Hermitian endomorphisms of $A$.
$B^{\op{s}}$ being real, we can choose $Q$ real, so that $\sis^{\op{c}}$ satisfies \ref{item:adj} as well. We have proved that $\synorsp$ is not empty. 

Now consider $\sis \in \synorsp$ and $V \in \Ci ( M , TM \otimes \herm A)$. Then the map $\sis'$ defined by 
 $$\sis' (T) := \sis (T) + df (V) \qquad \text{ where } \qquad f = \sip(T) ,$$
satisfies \ref{item:def}, \ref{item:adj}, \ref{item:prod}. Conversely, let $\sis$ and $\sis'$ in $\synorsp$. By  \ref{item:def}, $\sis' - \sis$ vanishes on $k^{-1} \To$. Since $k^{-1} \To$ is the kernel of the principal symbol map, we have $ \sis' (T) - \sis (T) = D( \si_p(T))$ where $D: \Ci ( M) \rightarrow \Ci ( M , \op{End}A)$. By \ref{item:adj}, $D$ is real. By \ref{item:prod}, $D$ is a derivation, that is $D(fg) = fD(g) + gD(f)$. We conclude that $D (f) = df (V)$ for some $V \in \Ci ( M , TM \otimes \herm A)$. To summarize we have proved the 
following Proposition.

\begin{prop} \label{prop:affine}
$\synorsp$ is an affine space with associated vector space $\Ci ( M , TM \otimes \herm A)$. 
\end{prop}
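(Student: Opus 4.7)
The plan is to prove the two halves of the affine structure separately: (a) exhibit at least one element of \(\synorsp\), and (b) identify the torsor structure under \(\Ci(M, TM \otimes \herm A)\).

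For non-emptiness, my starting point is the contravariant subprincipal symbol \(\sis^{\op{c}}\), extracted as the \(\hb\)-coefficient of \(\si_{\op{cont}}\). Conditions \ref{item:def} and \ref{item:adj} are immediate from the definition of \(\si_{\op{cont}}\) and the fact that Hermitian adjunction matches complex conjugation of contravariant symbols. The obstruction to \ref{item:prod} is measured by a bidifferential operator \(B\) on \(\Ci(M)\) with values in \(\Ci(M, \op{End} A)\), whose antisymmetric part is \(\tfrac{1}{2i}\{\cdot,\cdot\}\) by the commutator formula \eqref{eq:commutator}. I would then use the associativity of Toeplitz composition to show that \(B\) is a Hochschild 2-cocycle, and hence that its symmetric part \(B^{\op{s}}\) — being symmetric with the Poisson bracket stripped off — is itself a symmetric Hochschild cocycle. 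Proposition \ref{prop:Hoschild} then produces a differential operator \(Q: \Ci(M) \to \Ci(M, \op{End} A)\) with \(B^{\op{s}}(f,g) = fQ(g) + Q(f)g - Q(fg)\), and \(\sis := \sis^{\op{c}} + Q\circ \sip\) satisfies \ref{item:prod} by a direct substitution. Reality of \(\sis^{\op{c}}\) forces \(B^{\op{s}}\) to be Hermitian-valued, so \(Q\) can be chosen real and \ref{item:adj} is preserved.

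For the affine structure, I would first check the action: given \(\sis \in \synorsp\) and \(V \in \Ci(M, TM \otimes \herm A)\), set \((V+\sis)(T) := \sis(T) + (d\sip(T))(V)\). Conditions \ref{item:def} and \ref{item:adj} are obvious (using that \(V\) is Hermitian-valued), and \ref{item:prod} reduces, after subtraction, to the Leibniz identity \(d(fg) = f\, dg + g\, df\). Conversely, for \(\sis, \sis' \in \synorsp\), condition \ref{item:def} says \(\sis' - \sis\) vanishes on \(k^{-1}\To\), which is exactly \(\ker \sip\); so \(\sis'-\sis = D\circ \sip\) for a well-defined linear map \(D: \Ci(M) \to \Ci(M, \op{End} A)\). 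Condition \ref{item:adj} makes \(D\) real, and subtracting \ref{item:prod} for \(\sis'\) and \(\sis\) cancels the Poisson-bracket contribution and leaves the Leibniz rule \(D(fg) = f D(g) + g D(f)\). Any such derivation of \(\Ci(M)\) with values in \(\Ci(M, \herm A)\) is given by contraction with a unique section \(V \in \Ci(M, TM \otimes \herm A)\), which closes the argument.

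The main obstacle is the non-emptiness step, and within it the algebraic input from Proposition \ref{prop:Hoschild}: verifying that a symmetric bidifferential Hochschild cocycle with values in sections of a Hermitian bundle is a coboundary, with a \emph{real} primitive. Once that lemma is granted, everything else is bookkeeping — checking the interplay of \(d\), the Poisson bracket and the Leibniz identities — and the identification of Hermitian-valued derivations of \(\Ci(M)\) with sections of \(TM \otimes \herm A\), which is standard.
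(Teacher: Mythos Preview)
Your proposal is correct and follows essentially the same route as the paper: non-emptiness via modifying $\sis^{\op{c}}$ by a Hochschild primitive of the symmetric part $B^{\op{s}}$ (supplied by Proposition~\ref{prop:Hoschild}, with reality of $Q$ inherited from reality of $B^{\op{s}}$), and the torsor structure by factoring $\sis'-\sis$ through $\sip$ and recognising the resulting map as a $\herm A$-valued derivation of $\Ci(M)$.
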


It is helpful to view $\sis \in \synorsp$ as a first order deformation of the principal symbol. To give a sense to this, denote by $\sy$ be the vector space $  \Ci (M) \oplus \hb \Ci ( M , \op{End}A)$
and define the map $\si :\Tosc \rightarrow \sy$  by $$\si (T) = \sip (T )  + \hb \sis (T).$$ Using that $\sip$ is onto with kernel $k^{-1} \To$ and that $\sis$ satisfies \ref{item:def}, we see that $\si$ is onto with kernel $k^{-2} \To$. By \ref{item:adj}, $\si$ is real. Furthermore by \ref{item:prod}, $\si (ST) = \si (S) \star \si (T)$ where $\star$ is the product of $\sy$ given by 
$$ (f_0 + \hb f_1 ) \star ( g_0 + \hb g_1) = f_0 g_0 + \hb ( f_0 g_1 + f_1 g_0 + \tfrac{1}{2i} \{ f_0 , g_0\} ).  $$
$1$ being the identity of $\star$, we easily get that $\si ( \op{id} ) = 1$. 

For any $T, S \in \Tosc$, $k[T,S] $ is a Toeplitz operator with scalar symbol. Furthermore observe that the class of $k [T,S]$ modulo $\bigo( k^{-2})$ only depend on the classes of $T$ and $S$ modulo $\bigo ( k^{-2})$. So  there exists a unique bilinear map $$[\cdot , \cdot ]_{\si} : \sy \times \sy \rightarrow \sy$$ such that $\si ( ik [T,S] ) = [\si (T), \si (S) ]_{\si}$ for any $T$, $S \in \Tosc$. Since the commutator of endomorphisms is a derivation with respect to each argument and satisfies the Jacobi identity, we obtain that  for any $f,g, h \in \sy$  
\begin{gather} \label{eq:derrho}
[f \star g , h ]_{\si} = f \star [ g,h]_{\si} + [ f , h ]_{\si} \star g.
\end{gather}
and
\begin{gather} \label{eq:jacobirho}
[ f, [g, h ]_{\si} ]_{\si} + [g, [ h, f]_{\si}]_{\si} + [ h, [f, g ]_{\si}]_{\si} = 0 
\end{gather}
Furthermore, $[f,g ]_{\si} = -[g,f]_{\si}$ and $[f ^* , g ^* ] _{\si} = [f,g]_{\si}^*$.  Exploiting these equations, we deduce the following proposition. 

\begin{prop} \label{prop:commutator}
For any $\sis \in \synorsp$, there exists $R \in \Om^2 ( M, \herm A)$ and a connection $\nabla : \Ci ( M , \herm A) \rightarrow \Om^1 (M, \herm A)$ such that for any $f_0 + \hb f_1$, $g_0 + \hb g_1 \in \sy$ we have
$$ [ f_0 + \hb f_1 , g_0 + \hb g_1 ] _\si = \{ f_0 , g_0 \} + \hb \bigl( R( X,Y) - \nabla_X g_1 + \nabla_Y f_1 + i [ f_1 , g_1 ] \bigr) $$
where $X$ and $Y$ are the Hamiltonian vector fields of $f_0$ and $g_0$. Furthermore, 
$$ \op{courb} \nabla = i \op{ad}_R, \qquad \nabla R = 0 , \qquad \nabla \op{id} = 0 $$and $ \nabla (f_1 .g_1) = (\nabla f_1) . g_1 + f_1 .( \nabla g_1)$
for any $f_1, g_1 \in \Ci ( M , \herm A)$. 
\end{prop}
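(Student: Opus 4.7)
The plan is to recover $R$ and $\nabla$ block by block from the algebraic structure of $[\cdot, \cdot]_\si$. Split the bracket $[f_0 + \hb f_1, g_0 + \hb g_1]_\si$ into four contributions by bilinearity; by (\ref{eq:commutator}) the principal-symbol part is $\{f_0, g_0\}$, and the $\hb$-part decomposes into pieces coming from $(f_0, g_0)$, $(f_0, \hb g_1)$ together with its antisymmetric partner $(\hb f_1, g_0)$, and $(\hb f_1, \hb g_1)$.

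The $(\hb f_1, \hb g_1)$-piece is the easiest: choose Toeplitz operators $T, T' \in \To$ with $\sip(T) = f_1$, $\sip(T') = g_1$, so that $k^{-1} T, k^{-1} T' \in \Tosc$ represent $\hb f_1$ and $\hb g_1$; then $ik[k^{-1} T, k^{-1} T'] = i k^{-1}[T, T']$ has principal symbol $0$ and, by axiom \ref{item:def}, subprincipal symbol $\sip(i[T, T']) = i[f_1, g_1]$. For the $(f_0, g_0)$-piece $\be(f_0, g_0)$, apply the derivation law (\ref{eq:derrho}) to three scalar arguments: the Poisson-bracket contributions cancel via the Jacobi identity for $\{\cdot, \cdot\}$, leaving $\be(f_0 g_0, h_0) = f_0 \be(g_0, h_0) + g_0 \be(f_0, h_0)$. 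Combined with antisymmetry this forces $\be$ to be alternating and to depend only on the differentials of its arguments; via the symplectic identification $T^*M \simeq TM$ we obtain $\be(f_0, g_0) = R(X_{f_0}, X_{g_0})$ for some $R \in \Om^2(M, \op{End} A)$, and \ref{item:adj} places $R$ in $\Om^2(M, \herm A)$.

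The $(f_0, \hb g_1)$-piece $\hb \ga(f_0, g_1)$ is handled in the same spirit. Derivation in the first argument with scalar factors gives $\ga(f_0 h_0, g_1) = f_0 \ga(h_0, g_1) + h_0 \ga(f_0, g_1)$, so $\ga(\cdot, g_1)$ is a derivation of $\Ci(M)$ and factors through $X_{f_0}$. Derivation in the second argument with $f = f_0$, $g = g_0$, $h = \hb h_1$ yields $\ga(f_0, g_0 h_1) = \{f_0, g_0\} h_1 + g_0 \ga(f_0, h_1)$. Expanding $g_1 = \sum g^\al e_\al$ in a local trivialization of $\op{End} A$, this identity shows that $\ga(f_0, g_1)$ is first-order in $g_1$ and isolates a connection $\nabla$ on $\op{End} A$ such that $\ga(f_0, g_1) = -\nabla_{X_{f_0}} g_1$. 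Axiom \ref{item:adj} gives $\nabla$ real; and since $k^{-1} \Pi_k$ represents $\hb \op{id} \in \sy$ and commutes with every Toeplitz operator, we obtain $\nabla \op{id} = 0$.

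Finally, Jacobi (\ref{eq:jacobirho}) applied to the triple $(f_0, g_0, \hb h_1)$ produces $\op{courb}(\nabla) = i \op{ad}_R$, once we use $X_{\{f_0, g_0\}} = \pm [X_{f_0}, X_{g_0}]$; the triple of three scalars yields the Bianchi-type identity $\nabla R = 0$; and the triple $(f_0, \hb f_1, \hb g_1)$ gives the commutator-Leibniz rule $\nabla[f_1, g_1] = [\nabla f_1, g_1] + [f_1, \nabla g_1]$. The main technical step is to promote this commutator-Leibniz to the product-Leibniz stated in the proposition: at each point of $M$, $\op{End} A$ is an associative matrix algebra whose Lie derivations preserving $\op{id}$ are inner, and inner Lie derivations are automatically associative derivations, so the connection form of $\nabla$ is locally of the form $\op{ad}_\xi$ and therefore satisfies $\nabla(f_1 g_1) = (\nabla f_1) g_1 + f_1 \nabla g_1$.
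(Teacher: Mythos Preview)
Your argument is correct and, for the bulk of the proof, follows exactly the paper's route: the same four-block decomposition of $[\cdot,\cdot]_\si$, the same use of antisymmetry, reality, the vanishing on constants, the derivation law (\ref{eq:derrho}) to identify $R$ and $\nabla$, and the Jacobi identity (\ref{eq:jacobirho}) to obtain $\op{courb}\nabla = i\op{ad}_R$ and $\nabla R = 0$.

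The one genuine divergence is in the product--Leibniz rule $\nabla(f_1 g_1)=(\nabla f_1)g_1+f_1(\nabla g_1)$. The paper obtains it by going back to the Toeplitz operators: the associative identity $ik[T,k^{-1}RS]=R\,(ik[T,k^{-1}S])+(ik[T,k^{-1}R])\,S$ holds at the operator level, and reading off principal symbols gives the Leibniz rule directly. You instead stay at the symbol level: Jacobi on $(f_0,\hb f_1,\hb g_1)$ yields only the commutator--Leibniz $\nabla[f_1,g_1]=[\nabla f_1,g_1]+[f_1,\nabla g_1]$, and you then invoke the structure theory of $\mathfrak{gl}_r$ (every Lie derivation of $\op{End}A_x$ annihilating $\op{id}$ is $\op{ad}_\xi$ for some $\xi$, hence an associative derivation) to upgrade this to the product rule. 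That step is valid --- for $r\geqslant 2$ it rests on the simplicity of $\mathfrak{sl}_r$, and for $r=1$ it is vacuous since $\nabla\op{id}=0$ already pins down $\nabla$ --- but it is worth noting that the paper explicitly remarks that this last identity ``does not follow from the previous conditions'' and therefore appeals to the operator algebra rather than to the bracket axioms alone. Your structure-theoretic shortcut shows that in fact it \emph{does} follow once one uses that $\op{End}A$ is fibrewise a full matrix algebra; the paper's argument, by contrast, is more elementary (no representation theory) and makes transparent that the source of the identity is simply the associativity of operator composition.
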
 
\begin{proof} 
By condition \ref{item:def} and the properties of the principal symbol, we have 
$$ [ f_0 + \hb f_1 , g_0 + \hb g_1 ] _\si = \{ f_0 , g_0 \} + \hb \bigl( A( f_0 , g_0) + B( f_0 , g_1) +  C( g_0 , f_1)  + i [ f_1 , g_1 ] \bigr) ,$$
where $A$, $B$, $C$ are bilinear operators with value in $\Ci ( M , \op{End} A)$, $A$ being defined on $ \Ci (M) \times \Ci ( M)$ and $B,C$ on $ \Ci ( M) \times \Ci ( M , \op{End} A)$.  Since $[\cdot, \cdot]_{\si}$ is antisymmetric, we have that\begin{gather} \label{eq:antisym} 
 A( f_0 , g_0 ) = - A( g_0 , f_0) , \qquad B( f_0 , g_1) = - C( f_0, g_1)
\end{gather} 
Since $\si$ is real and $\si ( \op{id} ) =1$, $[ \cdot, \cdot]_{\si}$ is real and $[ 1, \cdot ]_{\si} = 0$. Consequently, $A$ and $B$ are real, meaning that $A( \con{f}_0 , \con{g}_0 ) = A( f_0 , g_0 ) ^*$ and  $B ( \con{f}_0, g_1^* ) = B( f_0 , g_1) ^*$. Furthermore 
\begin{gather}  \label{eq:const} 
 A( 1, g_0 ) = 0, \qquad B( 1, g_1 ) = 0 , \qquad B( f_0 , \op{id} ) = 0 .
\end{gather}  
The last equation follows from the fact that $ \si ( k^{-1} \op{id} ) = \hb \op{id}$ so that $[ \hb \op{id} , \cdot ]_{\si} = 0$. 
By equation (\ref{eq:derrho}), for any $f_0$, $g_0 \in \Ci ( M )$, and  $  h_1 \in \Ci (M , \op{End} A)$
$$ [ f_ 0 \star g_0 , \hb h_1 ]_{\si} =   f_0 \star [ g_0 , \hb h_1 ]_{\si}  +  [ f_0 , \hb h_1 ]_{\si} \star g_0  $$
so that 
\begin{gather} \label{eq:2}
 B( f_0g_0 , h_1) = f_0 B ( g_0 , h_1) + g_0 B (f_0 , h_1) 
\end{gather} 
Furthermore, as another application of Equation (\ref{eq:derrho}),
$$ [ f_ 0 ,   g_0  \star \hb h_1 ]_{\si} =   [ f_0,  g_0  ]_{\si} \star  \hb h_1  +  g_ 0 \star [ f_0 , \hb h_1 ]_{\si}     $$
so that 
\begin{gather} \label{eq:1}
 B( f_0 , g_0 h_1 ) = \{ f_0 , g_0 \} h_1 + g_0 B( f_0 , h_1) = -(X.g_0) h_1 + g_0 B( f_0 , h_1).
\end{gather} 
where $X$ is the Hamiltonian vector field of $f_0$. By Equation (\ref{eq:1}), $B ( f_0, \cdot)$ is a derivative of $\Ci ( M , \op{End} A)$ in the direction of $-X$. By Equation  (\ref{eq:2}) and the second equation of (\ref{eq:const}), for any $p \in M$,  $B (f_0, g_1) (p) = B (f_0', g_1 )(p)$ if $f_0$ and $f_0'$ have the same differential at $p$. These two facts imply that $B( f_0, h_1) = -\nabla_X h_1$ for a connection $\nabla : \Ci (M, \op{End} A) \rightarrow \Om^1 ( M, \op{End} A)$. Since $B$ is real,  $\nabla$ is actually the complexification of a connection $ \Ci ( M , \herm A) \rightarrow \Om^1 (M, \herm A)$. Furthermore by the last equation of (\ref{eq:const}), $\nabla \op{id} = 0 $. 

Consider now $f,g,h \in \Ci (M)$. Expanding Equation (\ref{eq:derrho}) and using Jacobi identity for the Poisson bracket, we obtain that 
$$ A( fg, h ) = f A( g, h) + g A( f,h).$$
Since furthermore $A$ is antisymmetric and vanishes on the constant, we conclude that $A(f, g) = R ( X, Y)$ with $R \in \Om^2 (M, \op{End} A)$ and $X$, $Y$ the Hamiltonian vector fields of $f$ and $g$. Since $A$ is real, $R \in \Om^{2 }(M, \herm A)$.  

Still working with $f,g,h \in \Ci (M)$ and denoting by $X$, $Y$ and $Z$ their Hamiltonian vector fields, we have
\begin{xalignat*}{1}
 [ f , [g, h ]_{\si} ]_{\si} = & \{ f, \{ g , h \} \} + \hb \bigl( A( f, \{ g , h \} ) +  B( f , A(g,h) ) \bigr) \\
= & \{ f, \{ g , h \} \} - \hb \bigl( R( X, [Y,Z]) + \nabla_X R( Y,Z) \bigr)
\end{xalignat*}
Since the bracket $[\cdot, \cdot]_{\si}$ satisfies the Jacobi identity, this shows that $(\nabla R )(X,Y, Z) = 0 $ so that $\nabla R=0$.  
Another application of the Jacobi identity (\ref{eq:jacobirho}) with $f, g \in \Ci(M)$ and $h \in \hb \Ci ( M , \op{End} A)$ leads to
$$ [ \nabla_X , \nabla_Y ] h_1 - \nabla_{[X,Y] } h_1  = i [ R( X,Y) , h_1 ] $$
which means that the curvature of $\nabla$ is $i\op{ad}_R$. 
The last equation to prove does not follows from the previous conditions. Consider three Toeplitz operators $R$, $S$ and $T$ with principal symbols $f_0 \in \Ci(M)$, $g_1 , h_1 \in \Ci ( M , \op{End} A)$. Starting from $ik [ T , k^{-1}RS ]=  R (ik [T, k^{-1} S]) + (ik [ T, k^{-1} R]) S$, we obtain that   $$ \nabla_X (g_1 .h_1) =g_1 .( \nabla_X h_1) + (\nabla_X g_1) . h_1 $$ where $X$ is the Hamiltonian vector field of $f_0$. 
\end{proof} 

We can also easily compute how the form $R$ and the connection $\nabla$ are changed when we choose a new subprincipal symbol. 

\begin{lemme} \label{lem:chang_jauge}
Let $\sis \in \synorsp$, $V \in \Ci ( M,TM \otimes  \herm A)$ and set $\sis ' =   \sis + V .\sip$. Then the connections $\nabla$, $\nabla '$ and the two forms $R$, $R'$ corresponding respectively to $\sis $ and $\sis'$ satisfy
$$ \nabla' = \nabla + i \op{ad}_{\al}, \qquad R' = R + \nabla \al + i \al \wedge \al $$
where $\al  \in \Om^1 ( M, \herm A)$ is given by $\al = \om (V, \cdot)$. 
\end{lemme}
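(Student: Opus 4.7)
The plan is to read off the transformation laws by applying Proposition \ref{prop:commutator} to both $\sis$ and $\sis'$ and matching terms, using the affine relation $\sis' = \sis + V.\sip$.

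Fix $T,S \in \Tosc$ with $f_0 = \sip(T)$, $g_0 = \sip(S)$, $f_1 = \sis(T)$, $g_1 = \sis(S)$, and write $X$, $Y$ for the Hamiltonian vector fields of $f_0$, $g_0$. Since $\sis'(T) = f_1 + V.f_0$ and $\sip(ik[T,S]) = \{f_0,g_0\}$, we have
$$ \si'(ik[T,S]) = \si(ik[T,S]) + \hbar \, V.\{f_0,g_0\}. $$
The key preliminary identification is $V.\{f_0,g_0\} = \al([X,Y])$, which follows from $V.h = dh(V) = \om(X_h, V)$ applied to $h = \{f_0,g_0\}$ together with the identity $X_{\{f_0,g_0\}} = \pm [X,Y]$ (coming from the Jacobi identity) and the definition $\al = \om(V,\cdot)$. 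Using similarly that $V.f_0 = \pm \al(X)$ allows us to rewrite $\sis'(T)$ in terms of $\al$ and $f_1$.

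Now apply the formula of Proposition \ref{prop:commutator} twice. For $\sis$ it gives
$$ \si(ik[T,S]) = \{f_0,g_0\} + \hbar\bigl( R(X,Y) - \nabla_X g_1 + \nabla_Y f_1 + i[f_1,g_1] \bigr), $$
and for $\sis'$ the analogous expression with $R',\nabla'$ and $\sis'(T), \sis'(S)$. Substituting $\sis'(T) = f_1 \pm \al(X)$, $\sis'(S) = g_1 \pm \al(Y)$ into the latter, expanding the derivatives and the commutator, and equating the two expressions for $\si'(ik[T,S])$ yields an identity that must hold for arbitrary $f_0, g_0 \in \Ci(M)$ and arbitrary $f_1, g_1 \in \Ci(M,\herm A)$.

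From this identity I extract the two transformation laws by separating the free variables. Matching the coefficient of $g_1$ as an arbitrary section gives
$$ -\nabla'_X g_1 \pm i[\al(X), g_1] = -\nabla_X g_1, $$
which is the sought relation $\nabla' = \nabla + i\op{ad}_\al$ (and the coefficient of $f_1$ gives the same relation consistently). Then setting $f_1 = g_1 = 0$ produces an expression for $R'(X,Y) - R(X,Y)$ involving $\nabla'_X \al(Y) - \nabla'_Y \al(X)$, $[\al(X),\al(Y)]$, and the term $\al([X,Y])$ coming from $V.\{f_0,g_0\}$. Using the just-established relation $\nabla' = \nabla + i\op{ad}_\al$ to convert the $\nabla'$ derivatives of $\al$ into $\nabla$ derivatives plus a correction $i[\al(X), \al(Y)]$, and using the formula $(\nabla\al)(X,Y) = \nabla_X \al(Y) - \nabla_Y \al(X) - \al([X,Y])$ for the covariant exterior derivative on $\herm A$-valued forms together with $(\al \wedge \al)(X,Y) = [\al(X), \al(Y)]$, the remaining terms collect into the claimed formula $R' = R + \nabla\al + i\al \wedge \al$.

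The main obstacle is the careful sign bookkeeping across the various conventions (Hamiltonian vector fields, Poisson bracket, the pairing $\om(V,\cdot)$) combined with the need to consistently convert between $\nabla'$ and $\nabla$ when manipulating $\nabla'_X \al(Y) - \nabla'_Y \al(X)$; the cleanest way is to perform this conversion \emph{after} the coefficient of $g_1$ has pinned down $\nabla' - \nabla$, so that the $R'$ computation only uses already-known facts. The remainder of the argument is purely formal and relies only on Proposition \ref{prop:commutator}.
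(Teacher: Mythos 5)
Your strategy is the right one and is evidently the computation the paper has in mind (the lemma is stated as an ``easy computation'' with no written proof): evaluate $\si'(ik[T,S])=\si(ik[T,S])+\hb\, V.\{f_0,g_0\}$, expand both sides via Proposition \ref{prop:commutator}, use that $\si$ is onto $\sy$ so that $f_0,g_0,f_1,g_1$ are free, read off $\nabla'-\nabla$ from the terms linear in $g_1$ (legitimate, since both sides are an affine first-order expression in $g_1$ and agree for all $g_1$, and every tangent vector is the value of a Hamiltonian vector field), then set $f_1=g_1=0$ and use $(\nabla\al)(X,Y)=\nabla_X\al(Y)-\nabla_Y\al(X)-\al([X,Y])$ and $(\al\wedge\al)(X,Y)=[\al(X),\al(Y)]$. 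All the ingredients are correctly identified, including the key point $V.\{f_0,g_0\}=\pm\al([X,Y])$.

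The gap is that you never resolve a single $\pm$, and the signs are the entire content of the statement; ``the remaining terms collect into the claimed formula'' is exactly the assertion to be proved. Moreover the bookkeeping is not innocuous: with the paper's conventions ($df_0=\om(X,\cdot)$, $\{f_0,g_0\}=-X.g_0$, $\al=\om(V,\cdot)$) one gets $V.f_0=\om(X,V)=-\al(X)$ and $X_{\{f_0,g_0\}}=-[X,Y]$, hence $V.\{f_0,g_0\}=+\al([X,Y])$, so $\si'(T)=f_0+\hb(f_1-\al(X))$. Matching the $g_1$-linear terms then yields $-\nabla_Xg_1=-\nabla'_Xg_1-i[\al(X),g_1]$, i.e.\ $\nabla'=\nabla-i\op{ad}_\al$, and the $f_1=g_1=0$ part yields $R'=R-\nabla\al+i\al\wedge\al$ --- the stated formulas with $\al$ replaced by $-\al$ (this is also what the heuristic $\nabla^A+\tfrac{1}{i}\iota_V\om$ discussed after Corollary \ref{cor:Fedosov_class} predicts, since it induces $\nabla+\tfrac{1}{i}\op{ad}_\al$ on $\op{End}A$). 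So you must either carry out the computation and exhibit the convention that restores the stated signs, or conclude that the lemma holds with $\al=\om(\cdot,V)$; leaving every sign as $\pm$ is not an option here.
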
 

The case where $A$ is a line bundle is already interesting. By the following corollary, the cohomology class $[R ] \in H^2(M, \R)$ does not depend on the choice of the subprincipal symbol map.  
\begin{cor} \label{cor:Fedosov_class}
Assume that $A$ is a line bundle. Then for any $\sis \in \synorsp$, we have 
$$[ f_0 + \hb f_1 , g_0 + \hb g_1 ]_{\si}  = \{ f_0 , g_0 \} + \hb \bigl( R( X,Y) - \mathcal{L}_X g_1 + \mathcal{L}_Y f_1  \bigr) $$
where $R \in \Om^2 ( M , \R)$ is closed and $\mathcal{L}_X$, $\mathcal{L}_Y$ are the Lie derivatives with respect to the Hamiltonian vector fields of $f_0$ and  $g_0$ respectively. Furthermore, the two-form $R'$ corresponding to $\sis ' =  \sis + V. \sip$ is given by $R' = R + d \iota_V \om $. So the cohomology class of $R$ does not depend on the choice of $\sis$.  
\end{cor}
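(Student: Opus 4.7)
The plan is to specialize Proposition \ref{prop:commutator} and Lemma \ref{lem:chang_jauge} to the case where $A$ is a line bundle, exploiting that $\op{End}A$ and hence $\herm A$ are then canonically trivial.

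First I would observe that when $A$ is a line bundle the canonical isomorphism $\op{End}A \simeq \underline{\C}$ makes $\herm A \simeq \underline{\R}$ a trivial real line bundle, globally generated by $\op{id}$; in particular the fiberwise product on $\herm A$ is commutative, so the term $i[f_1,g_1]$ in Proposition \ref{prop:commutator} vanishes identically. This already reduces the bracket formula to
\[ [ f_0 + \hb f_1, g_0 + \hb g_1 ]_{\si} = \{ f_0, g_0 \} + \hb \bigl( R(X,Y) - \nabla_X g_1 + \nabla_Y f_1 \bigr). \]

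Next I would identify $\nabla$ with the de Rham differential under the trivialization by $\op{id}$. Any $s \in \Ci(M, \herm A)$ can be written $s = \phi \cdot \op{id}$ for a unique $\phi \in \Ci(M,\R)$, and the defining property of a connection together with $\nabla \op{id} = 0$ (from Proposition \ref{prop:commutator}) gives $\nabla s = d\phi \cdot \op{id} + \phi\, \nabla \op{id} = d\phi \cdot \op{id}$. Hence $\nabla_X g_1 = \mathcal{L}_X g_1$ and similarly for $f_1$, which yields the claimed bracket formula. Closedness of $R$ is then immediate: the identity $\nabla R = 0$ from Proposition \ref{prop:commutator} reads $dR = 0$ under the trivialization.

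Finally, to handle the dependence on $\sis$, I would feed the above into Lemma \ref{lem:chang_jauge}. With $A$ a line bundle the one-form $\al = \om(V, \cdot) = \iota_V \om$ takes values in $\herm A \simeq \R$, so $\al \wedge \al = 0$ pointwise; combined with $\nabla = d$ this gives $R' = R + d \iota_V \om$, proving that the cohomology class $[R] \in H^2(M,\R)$ is independent of the choice of $\sis \in \synorsp$. No step presents a genuine obstacle here; the only point requiring care is the clean identification $\nabla = d$, which is what allows both the simplification of the bracket and the recognition of $\nabla \al$ as an exact form.
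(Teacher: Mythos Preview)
Your proof is correct and follows exactly the paper's approach: the paper's own proof consists of the single observation that $\herm A$ is the trivial real line bundle and that $\nabla \op{id}=0$ forces $\nabla$ to be the de Rham derivative, leaving the remaining consequences (vanishing of $i[f_1,g_1]$, closedness of $R$, and the specialization of Lemma \ref{lem:chang_jauge}) implicit. You have simply spelled out those implicit steps.
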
 

\begin{proof}
Since $A$ is a line bundle, $\herm A$ is naturally isomorphic with the trivial real line bundle. The fact that $\nabla \op{id} =0 $ implies that $\nabla$ is the de Rham derivation.
\end{proof} 

At this point, we could believe that for any $\sis \in \synorsp$, there exists some connection $\nabla^A : \Ci ( M , A) \rightarrow \Om^1 (M , A)$ preserving the metric of $A$, with curvature $R$ and such that $\nabla$ is the corresponding connection of $\herm A$. Indeed this would explain the equations in Proposition \ref{prop:commutator}. We could also believe that the connection corresponding to $\sis ' =  \sis + V. \sip$ is given by $\nabla^A + \frac{1}{i} \iota_V \om$ which would imply the equations in  Lemma \ref{lem:chang_jauge}. 

As we will see, this explanation almost holds, but we have to take into account the metaplectic correction. For instance, we will see that in the case where $A$ is a line bundle, the cohomology class of $[\frac{1}{2\pi} R]$ is not $c_1 (A)$ but $c_1 ( A)+ \frac{1}{2} c_1 (M)$.

\section{Half-form computations}  \label{sec:half-form-comp}
\subsection{Canonical bundle and derivatives} 
Let $2n$ be the dimension of $M$. 
Let $K = \wedge ^{n,0} T^*M$ be the canonical bundle of $M$ with respect to $j$. Denote by $p$ the projection $ \wedge ^{n} (T^*M \otimes \C) \rightarrow \wedge ^{n,0} T^*M$ with kernel $\wedge ^{n-1,1} T^*M \oplus \ldots \oplus \wedge ^{0,n} T^*M$. Then for any vector field $X$ of $M$, introduce the derivative 
\begin{gather} \label{eq:dercan}
 D^K_X : \Ci ( M , K) \rightarrow \Ci ( M , K), \qquad D_X^K \mu = p \mathcal{L}_X \mu
\end{gather}
where $ \mathcal{L}_X$ is the Lie derivative with respect to $X$. 

\begin{lemme} \label{lem:der}
For any $X \in \Ci ( M , TM)$ and $f \in \Ci ( M)$ we have
$$ D_{fX}^K = f D_X^K +  df ( X^{1,0} ) ,$$
where $X^{1,0} \in \Ci ( M, T^{1,0} M)$ is the $(1,0)$-component of $X$. 
Furthermore if $X$ is symplectic, $D_X^K$ preserves the metric of $K$ induced by $\om$. 
\end{lemme}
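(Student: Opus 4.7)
For the first identity, the plan is to combine the Cartan-type formula $\mathcal{L}_{fX}=f\mathcal{L}_X+df\wedge\iota_X$ with bi-degree considerations. Apply this to $\mu\in\Ci(M,K)\subset\Om^{n,0}(M)$, then apply the projector $p$ onto $\wedge^{n,0}T^*M$. The first piece gives $f\,p(\mathcal{L}_X\mu)=fD^K_X\mu$. For the second piece, observe that since $\mu$ is of type $(n,0)$, its contraction with the $(0,1)$-part of $X$ vanishes, so $\iota_X\mu=\iota_{X^{1,0}}\mu$ is of type $(n-1,0)$. Hence $df\wedge\iota_X\mu$ has a $(n,0)$-part equal to $\partial f\wedge\iota_{X^{1,0}}\mu$ and a $(n-1,1)$-part killed by $p$. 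The identity $\alpha\wedge\iota_V\mu=\alpha(V)\mu$ for any $(1,0)$-form $\alpha$, any $(1,0)$-vector $V$ and any $(n,0)$-form $\mu$ (which follows by applying $\iota_V$ to the vanishing $(n+1,0)$-form $\alpha\wedge\mu$) then yields $p(df\wedge\iota_X\mu)=\partial f(X^{1,0})\,\mu=df(X^{1,0})\,\mu$, since $\bar\partial f$ kills $(1,0)$-vectors. This gives the first assertion.

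For the second assertion, the strategy is to relate the fibre metric on $K$ to the Liouville volume form. A local computation in a unitary holomorphic frame shows that there is a universal constant $c_n$ such that for all $\mu,\nu\in K_x$,
$$\mu\wedge\overline{\nu}=c_n\,\langle\mu,\nu\rangle\,\tfrac{\om^n}{n!}.$$
Since $X$ is symplectic, $\mathcal{L}_X\om=0$ and hence $\mathcal{L}_X(\om^n/n!)=0$. Applying $\mathcal{L}_X$ to the identity above, one obtains
$$\mathcal{L}_X\mu\wedge\overline{\nu}+\mu\wedge\mathcal{L}_X\overline{\nu}=c_n\,X\!\cdot\!\langle\mu,\nu\rangle\,\tfrac{\om^n}{n!}.$$
Now, a bi-degree argument is the key: write $\mathcal{L}_X\mu=D^K_X\mu+\eta$ with $\eta\in\bigoplus_{q\geq 1}\Om^{n-q,q}$; then $\eta\wedge\overline{\nu}$ contributes only in bi-degrees $(n-q,n+q)$ with $q\geq 1$, which all vanish. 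Hence $\mathcal{L}_X\mu\wedge\overline{\nu}=D^K_X\mu\wedge\overline{\nu}$, and symmetrically $\mu\wedge\mathcal{L}_X\overline{\nu}=\mu\wedge\overline{D^K_X\nu}$ (using that $X$ is real so $\mathcal{L}_X\overline{\nu}=\overline{\mathcal{L}_X\nu}$ and that conjugation swaps bi-degrees). Substituting back and using the defining identity of the metric again gives
$$X\!\cdot\!\langle\mu,\nu\rangle=\langle D^K_X\mu,\nu\rangle+\langle\mu,D^K_X\nu\rangle,$$
which is exactly the statement that $D^K_X$ preserves the metric.

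The main obstacle, such as it is, is purely bookkeeping: keeping track of bi-degrees to verify that wedging $\mathcal{L}_X\mu$ (resp. $\mathcal{L}_X\overline{\nu}$) with $\overline{\nu}$ (resp. $\mu$) only sees the $(n,0)$ (resp. $(0,n)$) component. Once this is observed, both halves reduce to the Cartan formula and the invariance of the Liouville form under symplectic flows.
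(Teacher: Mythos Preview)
Your proof is correct and follows essentially the same route as the paper's: the Cartan identity $\mathcal{L}_{fX}=f\mathcal{L}_X+df\wedge\iota_X$ combined with the projection $p$ for the first assertion, and the relation $\mu\wedge\overline{\nu}=c_n\langle\mu,\nu\rangle\,\om^n$ together with $\mathcal{L}_X\om^n=0$ and a bi-degree check for the second. The only difference is cosmetic: the paper verifies $p(df\wedge\iota_X\theta)=df(X^{1,0})\theta$ by an explicit computation in a local frame $(\partial_i,\theta_i)$, whereas you use the abstract identity $\alpha\wedge\iota_V\mu=\alpha(V)\mu$ for $(n,0)$-forms.
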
 

$D_X^K$ preserves the metric means that for any sections $s,t \in \Ci (M, K)$ we have
$$ \mathcal{L}_X (s,t)= ( D_X^K s , t) + ( s ,D_X^K t) $$
where $(s,t)$ is the pointwise scalar product. Equivalently we say that $D_X^K$ is Hermitian. 

\begin{proof} 
By Cartan formula, we have $\mathcal{L}_{fX} - f\mathcal{L}_X = df \wedge \iota_X $. Introduce a local frame $(\partial_i)$ of $T^{1,0} M$ and denote by $(\te_i)$ the dual frame. Since $D_{f X}^K $ and $f D_X^K$ are both derivatives  in the direction of $fX$, they differ by the multiplication by a function. We compute this function by testing on  the frame $\te = \te_1 \wedge \ldots \wedge \te_n$ of the canonical bundle. Write $X^{1,0} = \sum X_i \partial_i $ so that 
$$ \iota_X \te = \sum ( -1) ^{i+1} X_i \te_1 \wedge \ldots \wedge \hat{\te}_i \wedge \ldots \wedge \te_n$$ 
and 
$$ p( df \wedge \iota_X \te) =  \sum X_i df ( \partial_i) \te  = df ( X^{1,0} ) \te .$$
Consequently, $ D_{fX}^K = f D_X^K +  df ( X^{1,0} )$. 

Let us prove that $D_X^K$ preserves the metric of $K$ when $X$ is symplectic. Recall that for any sections $s, t$ of $K$, $ s \wedge \con t =  C_n (s, t) \om ^n $
for some constant $C_n$ independent of $s$, $t$. Since 
$$ \mathcal{L}_X ( s \wedge \con t ) = (\mathcal{L}_X s ) \wedge \con{t} + s \wedge \con{ \mathcal{L}_X   t  } = (D_X^K s ) \wedge \con{t} + s \wedge \con{ D_X^K t}, $$
we deduce that  $\mathcal{L}_X (s,t)= ( D_X^K s , t) + ( s ,D_X^K t)$  when $X$ satisfies $\mathcal{L}_X \om^n = 0 $. 
\end{proof} 

\begin{lemme} \label{lem:com}
For any vector fields $X,Y$ of $M$, we have
$$  [D^K_X , D^K_Y ] = D^K _{[X,Y]} + B_j(X,Y)$$
where $B_j(X,Y)$ is the function of $M$ given by
$$ B_j (X,Y) = \sum  (\mathcal{L}_X \te_i)( \con{\partial}_j)  (\mathcal{L}_Y \con{\te}_j )( \partial_i) -  (\mathcal{L}_Y \te_i)( \con{\partial}_j)  (\mathcal{L}_X \con{\te}_j )( \partial_i)
$$
with $(\partial_i)$ a local frame of $T^{1,0}M$ and $( \te_i)$ the dual frame. 
\end{lemme}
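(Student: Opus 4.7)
The plan is to first reduce the commutator of $D^K_X$ and $D^K_Y$ to an expression involving only the Lie derivatives and the projection $p$, using the identity $[\mathcal{L}_X,\mathcal{L}_Y] = \mathcal{L}_{[X,Y]}$. Writing $q := 1 - p$ for the projection onto $\wedge^{n-1,1}T^*M \oplus \ldots \oplus \wedge^{0,n}T^*M$, and using $p\mathcal{L}_X p = p\mathcal{L}_X - p\mathcal{L}_X q$, one gets for any $\mu \in \Ci(M,K)$
$$ D^K_X D^K_Y \mu = p \mathcal{L}_X \mathcal{L}_Y \mu - p \mathcal{L}_X q \mathcal{L}_Y \mu, $$
and the analogous identity with $X,Y$ swapped. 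Subtracting and using $[\mathcal{L}_X, \mathcal{L}_Y] = \mathcal{L}_{[X,Y]}$,
$$ [D^K_X, D^K_Y]\mu = D^K_{[X,Y]} \mu + \bigl( p\mathcal{L}_Y q \mathcal{L}_X - p \mathcal{L}_X q \mathcal{L}_Y \bigr) \mu. $$
So it suffices to show that the zeroth-order operator $p\mathcal{L}_Y q\mathcal{L}_X - p\mathcal{L}_X q \mathcal{L}_Y$ acts on $K$ by multiplication by the claimed function $B_j(X,Y)$.

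This will be done by a local computation in the frame $\te := \te_1 \wedge \ldots \wedge \te_n$. First, expanding $\mathcal{L}_X \te$ by Leibniz and decomposing each $\mathcal{L}_X \te_i = (\mathcal{L}_X \te_i)^{1,0} + \sum_j (\mathcal{L}_X\te_i)(\con{\partial}_j) \con{\te}_j$, one isolates the $(n-1,1)$-component
$$ q \mathcal{L}_X \te = \sum_{i,j} (\mathcal{L}_X \te_i)(\con{\partial}_j)\; \te_1 \wedge \ldots \wedge \underbrace{\con{\te}_j}_{i\text{-th}} \wedge \ldots \wedge \te_n \mod \wedge^{n-2,2} \oplus \ldots $$
modulo terms of lower holomorphic degree, which do not contribute after applying $p\mathcal{L}_Y$ (since $\mathcal{L}_Y$ changes the bidegree by at most $\pm 1$). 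Then applying $p \mathcal{L}_Y$ to each such $(n-1,1)$-form, the only contribution to $\wedge^{n,0}$ comes from replacing the single $\con{\te}_j$ (in the $i$-th slot) by the $(1,0)$-part of $\mathcal{L}_Y \con{\te}_j$; all other terms produce a repeated $\te_k$ and vanish. This gives
$$ p \mathcal{L}_Y q \mathcal{L}_X \te = \sum_{i,j} (\mathcal{L}_X \te_i)(\con{\partial}_j)\,(\mathcal{L}_Y \con{\te}_j)(\partial_i)\; \te. $$
Antisymmetrizing in $(X,Y)$ yields the announced formula for $B_j(X,Y)$.

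The main obstacle is bookkeeping in the local computation: one must carefully track which components of $\mathcal{L}_X$ and $\mathcal{L}_Y$ survive the two projections $q$ then $p$. The key observation that simplifies everything is that in the final application of $p$, the only combinatorially non-vanishing term is the one for which the derivative lands exactly on the $\con{\te}_j$ sitting in the slot that had previously been vacated by $\te_i$; every other placement either produces a form of wrong bidegree or a wedge with a repeated factor. Once this is recognized, the rest is a direct dualization using $\te_k(\partial_i) = \delta_{ki}$.
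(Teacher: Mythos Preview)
Your proposal is correct and follows essentially the same route as the paper: writing $q=\op{id}-p$, you obtain $[D^K_X,D^K_Y]-D^K_{[X,Y]}=p\mathcal{L}_Y q\mathcal{L}_X-p\mathcal{L}_X q\mathcal{L}_Y$ (the paper writes this with $p-\op{id}$ in place of $-q$), and then the local computation on $\te=\te_1\wedge\ldots\wedge\te_n$ is identical. Your remark that $q\mathcal{L}_X\te$ has no components below $\wedge^{n-1,1}$ is in fact exact rather than merely ``modulo lower degree'', since $\mathcal{L}_X$ applied to an $(n,0)$-form lands in $\wedge^{n,0}\oplus\wedge^{n-1,1}$; but your weaker observation that any such components would be killed by $p\mathcal{L}_Y$ is also correct and suffices.
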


\begin{proof} 
Since 
$$ p \mathcal{L}_X p \mathcal{L}_Y p - p \mathcal{L} _X \mathcal{L}_Y p =  p \mathcal{L}_X ( p - \op{id} ) \mathcal{L}_Y p,$$ 
we have
$$ [D^K_X , D^K_Y ] - D^K _{[X,Y]} = p \mathcal{L}_X ( p - \op{id} ) \mathcal{L}_Y  - p  \mathcal{L}_Y ( p - \op{id} ) \mathcal{L}_X    $$
Let $\te = \te_1 \wedge \ldots \wedge \te_n$. We have 
$$ \mathcal{L}_Y \te = \sum ( -1) ^{i+1} (\mathcal{L}_Y \te_i) \wedge \te_1 \wedge \ldots \wedge \hat{\te}_i \wedge \ldots \wedge \te_n $$
 so that
$$ (p - \op{id} )  \mathcal{L}_Y \te  = \sum ( -1) ^{i} (\mathcal{L}_Y \te_i)( \con{\partial}_j) \con{\te}_j   \wedge \te_1 \wedge \ldots \wedge \hat{\te}_i \wedge \ldots \wedge \te_n $$
and
$$ p \mathcal{L}_X ( p - \op{id} ) \mathcal{L}_Y \te = - \sum  (\mathcal{L}_Y \te_i)( \con{\partial}_j)  (\mathcal{L}_X \con{\te}_j )( \partial_i) \te $$
The final result follows.  
\end{proof}

\subsection{Metaplectic correction} \label{sec:metapl-corr}

Consider the set $\der$ of linear map $D : \Ci ( M , TM) \rightarrow \op{End} ( \Ci ( M , A))$ satisfying the following conditions
\begin{gather}
\label{eq:der1} D_X ( fs) = (X.f) s + f D_X s \\
\label{eq:der2} D_{fX} s = f D_X s + \tfrac{1}{2} df( X^{1,0}) s 
\end{gather}
for any vector field $Y$ of $M$, function $f \in \Ci ( M)$ and sections $s, t \in \Ci ( M , A)$. 
In the case where $X$ is a symplectic vector field, we also require that $D_X$ is Hermitian, 
 \begin{gather}
\label{eq:der3} X. ( s, t) = ( D_X s , t ) + ( s, D_X t)  , \qquad s, t \in \Ci ( M , A) 
\end{gather}
 where  $(s,t) \in \Ci ( M )$ is the pointwise scalar product of $s$ and $t$.
Observe that for any $D \in \der$ and $\al \in \Om^1 ( M , \herm A)$, $D + \frac{1}{i} \al$ belongs to $\der$. 

\begin{prop} \label{prop:der}
The space $\der$ is a real affine space directed by $\Om^1 ( M , \herm A)$.   
\end{prop}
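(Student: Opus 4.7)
The plan is to prove (i) $\der$ is non-empty via a global construction and (ii) any two elements differ by $\tfrac{1}{i}\al$ for a unique $\al \in \Om^1(M, \herm A)$. The reverse inclusion $D + \tfrac{1}{i}\al \in \der$ is the observation made immediately before the statement.

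For (i), I fix a Hermitian connection $\nabla^K$ on $K$ (which exists since $K$ is Hermitian). Both $D^K$ and $\nabla^K$ satisfy Leibniz in sections of $K$, so their difference is $\Ci(M)$-linear on sections; $K$ being a line bundle, one may write $(D^K - \nabla^K)_X = b(X) \cdot \op{id}_K$ for a globally defined $b(X) \in \Ci(M, \C)$. Lemma \ref{lem:der} combined with $\Ci(M)$-linearity of $\nabla^K$ in $X$ gives the twisted transformation $b(fX) = f b(X) + df(X^{1,0})$, while the Hermiticity of $D^K_X$ and of $\nabla^K_X$ (both valid for symplectic $X$) yields $b(X) + \overline{b(X)} = 0$ for such $X$. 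Fix a global Hermitian connection $\nabla^A$ on $A$ and set
\begin{equation*}
D_X s := \nabla^A_X s + \tfrac{1}{2} b(X) s.
\end{equation*}
The three axioms follow by inspection: (\ref{eq:der1}) from Leibniz of $\nabla^A$, (\ref{eq:der2}) from the twist formula for $b$, and (\ref{eq:der3}) from $\nabla^A$ being Hermitian together with $b(X) \in i\R$ for $X$ symplectic.

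For (ii), let $D, D' \in \der$ and $E := D - D'$. Axiom (\ref{eq:der1}) gives $E_X(fs) = f E_X s$; the universal twists $\tfrac{1}{2} df(X^{1,0})$ in (\ref{eq:der2}) cancel, yielding $E_{fX} s = f E_X s$. Hence $E$ is a tensor, represented by a section $T \in \Om^1(M, \op{End} A)$. Subtracting (\ref{eq:der3}) for $D$ and $D'$ shows $T(X) + T(X)^* = 0$ whenever $X$ is symplectic. Since every tangent vector is the value at some point of a Hamiltonian vector field, $T(v)$ is skew-Hermitian for every $v \in TM$; therefore $iT \in \Om^1(M, \herm A)$, and $D' = D + \tfrac{1}{i}\al$ with $\al := -iT$ uniquely determined.

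The only non-formal step is the Hermitian analysis: the Hermiticity conditions both in the definition of $\der$ and in Lemma \ref{lem:der} are imposed only on symplectic vector fields. One must observe that this is exactly enough to force $b(X) \in i\R$ on symplectic $X$ in the construction, and—via the fact that Hamiltonian vector fields span each $T_pM$—is exactly enough to propagate the skew-Hermiticity of $T$ from symplectic vector fields to all of $TM$ in the rigidity step.
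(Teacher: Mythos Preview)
Your proof is correct and follows essentially the same approach as the paper. The paper's proof treats only the non-emptiness (``the only difficulty''), building the very same $D_X = \nabla^A_X + \tfrac{1}{2}b(X)\,\op{id}_A$ with $b(X) = D^K_X - \nabla^K_X$; your part (ii) makes explicit the tensorial/skew-Hermitian argument that the paper leaves implicit, and your observation that Hamiltonian vector fields realize every tangent vector is exactly what is needed to pass from the symplectic hypothesis in (\ref{eq:der3}) to all of $TM$.
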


\begin{proof}
The only difficulty is to check that $\der $ is not empty. To do that, we will use the derivations $D_X^K$ introduced in (\ref{eq:dercan}).  
Let $\nabla^A$ and $\nabla ^K $ be Hermitian connections of $A$ and $K$ respectively. For any vector field $X$, define
$$ D_X := \nabla^A_X + \tfrac{1}{2} B( X) \op{id} _A : \Ci ( M , A) \rightarrow \Ci ( M , A) $$
where $ B(X) \in \Ci ( M)$ is given by $B( X) = D^K_X - \nabla ^K _X$. $D_X $ is clearly a derivation in the direction of $X$. By Lemma \ref{lem:der}, $D_X $ satisfies Condition (\ref{eq:der2}) and preserves the metric when $X$ is symplectic. 
\end{proof}

\begin{rem} \label{rem:demi}
Let $\delta$ be a half-form bundle, that is a Hermitian line bundle such that $\delta^2$ is isomorphic to $K$. For any vector field $X$ of $M$, let $D_X^\delta$ be the derivative of $\Ci ( M , \delta)$ in the direction of $X$ such that
$$  D_X^K s^2 = 2 s \otimes D_X^\delta s , \qquad \forall s \in \Ci ( M , \delta). $$  Write $A = B \otimes \delta$ where $B = A \otimes \delta^{-1}$. Then there is a one to one correspondence between the space of Hermitian connections of $B$ and $\der$ given as follows: for any Hermitian connection $\nabla^B$ of $B$, we set
$$ D_Y = \nabla^B_Y \otimes \op{id} + \op{id} \otimes D_Y^{\delta} .$$
This provides another proof of Proposition \ref{lem:der} in the case where $M$ has a half-form bundle. 
\qed \end{rem} 

\begin{rem} 
Assume that $A$ is a line bundle. Then there is a one to one correspondence between the space of Hermitian connections of $C= A^2 \otimes K^{-1}$ and $\der$ given as follows: for any Hermitian connection $\nabla^C$ of $C$, we define
first 
$$ D^{A^2}_X = \nabla^C_X \otimes \op{id} + \op{id} \otimes D^K_X $$
and then $D_X $ is the unique derivation of $\Ci ( M , A)$  with respect to $X$ satisfying 
$$ D_X^{A^2} ( s^2)   = 2 s \otimes D_X s , \qquad \forall s \in \Ci ( M , A).\qed $$
 \end{rem} 

In the following proposition, we compute some kind of curvature for $D \in \der$.
\begin{prop} \label{prop:met_com}
For any $D \in \der $, there exists $R \in \Om^2 ( M , \herm A)$ and a covariant derivation $\nabla  : \Ci ( M , \herm A) \rightarrow \Om^1 ( M , \herm A)$ such that for any vector fields $X,Y$ of $M$
\begin{gather} \label{eq:fausse_Courbure}
 [D_X , D_Y ] = D_{[X,Y]} + i  R(X,Y) + \tfrac{1}{2} B_j(X,Y)
\end{gather}
with $B_j(X,Y)$ the function defined in Lemma \ref{lem:com} and for any $f \in \Ci ( M , \herm A)$ and $s \in \Ci ( M, A)$, 
\begin{gather} \label{eq:conend}
 D_X ( f.s) = (\nabla_X f).s + f .D_X . 
\end{gather}
Furthermore, the curvature of $\nabla$ is $i \op{ad}_R$, $\nabla R = 0 $, $\nabla \op{id} = 0 $ and for any  $f, g \in \Ci ( M , \op{End}  A)$, $\nabla (f. g) = (\nabla f). g  + f.( \nabla g)$.  
\end{prop}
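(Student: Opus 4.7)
The plan is to construct $\nabla$ and $R$ directly from $D$ by algebraic manipulation of the defining relations (\ref{eq:der1})--(\ref{eq:der3}). For any $f \in \Ci(M, \op{End} A)$, I define $\nabla_X f$ by equation (\ref{eq:conend}): $(\nabla_X f) s := D_X(fs) - f (D_X s)$. By (\ref{eq:der1}), the commutator of $D_X$ with multiplication by $f$ is $\Ci(M)$-linear in $s$, so $\nabla_X f$ is a genuine section of $\op{End} A$. The half-differential correction in (\ref{eq:der2}) is scalar and cancels in the difference $D_{hX}(fs) - f D_{hX} s$, yielding $\Ci(M)$-linearity in $X$, so $\nabla$ is a linear connection. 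The operator identity $[D_X, \text{mult. by } fg] = [D_X, \text{mult. by } f] \cdot g + f \cdot [D_X, \text{mult. by } g]$ delivers the Leibniz rule $\nabla(fg) = (\nabla f) g + f (\nabla g)$, while $\nabla \op{id} = 0$ is immediate since multiplication by $\op{id}$ commutes with $D_X$. To see that $\nabla$ restricts to $\herm A$, I use (\ref{eq:der3}) for symplectic $X$ together with $(fs, t) = (s, ft)$ for Hermitian $f$; this extends to all $X$ by $\Ci(M)$-linearity in $X$ and the fact that symplectic vector fields span $TM$ locally.

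Next, set $P(X, Y) := [D_X, D_Y] - D_{[X, Y]}$. A short expansion using only (\ref{eq:der1}) shows $P(X, Y)(fs) = f \cdot P(X, Y) s$, so $P(X, Y)$ is multiplication by some section of $\op{End} A$; I then define $R$ by $i R(X, Y) := P(X, Y) - \tfrac{1}{2} B_j(X, Y)$. The main obstacle is to verify tensoriality of $R$ in $(X, Y)$, since $D_{hX}$ contains the non-$\Ci(M)$-linear term $\tfrac{1}{2} dh(X^{1, 0})$ by (\ref{eq:der2}). The plan is to compute $P(X, hY) - h P(X, Y)$, which after expansion yields the scalar operator $\tfrac{1}{2}\bigl(X.dh(Y^{1, 0}) - dh([X, Y]^{1, 0}) - d(X.h)(Y^{1, 0})\bigr) \op{id}$, and then to check that this matches $\tfrac{1}{2}\bigl(B_j(X, hY) - h B_j(X, Y)\bigr)$ via the explicit formula of Lemma \ref{lem:com} expressed in a local frame $(\partial_i)$ of $T^{1, 0} M$ and its dual. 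Hermitian-valuedness of $R$ then follows by applying (\ref{eq:der3}) to $P(X, Y)$ for symplectic $X, Y$ and the reality of $B_j$.

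The remaining identities are formal consequences. To identify the curvature of $\nabla$, I apply $P(X, Y)$ to $f s$ with $f \in \Ci(M, \herm A)$, but this time use the already-established Leibniz rule to rewrite $D_X D_Y(fs) = D_X\bigl((\nabla_Y f) s + f D_Y s\bigr)$ and expand symmetrically. The coefficient of $s$ in $P(X, Y)(fs) - f P(X, Y) s$ on the one hand is $([\nabla_X, \nabla_Y] - \nabla_{[X, Y]}) f$, while on the other hand the $\tfrac{1}{2} B_j$ term is scalar and commutes with $f$, leaving $[i R(X, Y), f]$; this gives the relation $[\nabla_X, \nabla_Y] f - \nabla_{[X, Y]} f = i[R(X, Y), f]$, i.e., the curvature of $\nabla$ equals $i \op{ad}_R$. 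Finally, $\nabla R = 0$ follows from the Jacobi identity for triple commutators of $D_X$, $D_Y$, $D_Z$ applied to (\ref{eq:fausse_Courbure}), exactly as in the derivation of $\nabla R = 0$ at the end of the proof of Proposition \ref{prop:commutator}.
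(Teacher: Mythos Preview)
Your approach is correct and genuinely different from the paper's. The paper does not build $\nabla$ and $R$ by direct algebraic manipulation of (\ref{eq:der1})--(\ref{eq:der3}); instead it first treats the case $A=B\otimes\delta$ with $D_X=\nabla^B_X\otimes\op{id}+\op{id}\otimes D^\delta_X$ (Remark~\ref{rem:demi}), where $R=\tfrac{1}{i}\op{courb}\nabla^B$ and $\nabla$ is the induced connection on $\herm B\simeq\herm A$, so that (\ref{eq:fausse_Courbure}) is an immediate consequence of Lemma~\ref{lem:com}. The general case is then obtained by observing that $R$ and $\nabla$ are uniquely determined by (\ref{eq:fausse_Courbure}) and (\ref{eq:conend}), so local existence (a half-form bundle always exists over small open sets) yields global existence. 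Your route avoids the half-form bundle entirely and exhibits $\nabla$ and $R$ intrinsically from $D$, at the cost of having to verify tensoriality and the Bianchi-type identities by hand; the paper's route makes those identities automatic from the curvature of $\nabla^B$ but hides the intrinsic character of the construction.

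Two small remarks on your write-up. First, for the tensoriality of $R$ there is a cleaner argument than unpacking the explicit formula of Lemma~\ref{lem:com}: since $D^K$ satisfies $D^K_{hY}=hD^K_Y+dh(Y^{1,0})$ by Lemma~\ref{lem:der} and $[D^K_X,D^K_Y]-D^K_{[X,Y]}=B_j(X,Y)$ by Lemma~\ref{lem:com}, the very computation you carried out for $P$ (with coefficient $1$ in place of $\tfrac12$) gives $B_j(X,hY)-hB_j(X,Y)=X.dh(Y^{1,0})-dh([X,Y]^{1,0})-d(X.h)(Y^{1,0})$, which is exactly twice your defect for $P$; subtraction then kills both. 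Second, ``reality of $B_j$'' is not quite right: for real $X,Y$ one has $\overline{B_j(X,Y)}=-B_j(X,Y)$, i.e.\ $B_j$ is purely imaginary (this also follows from Lemma~\ref{lem:der}, since $D^K_X$ is Hermitian for symplectic $X$, hence $B_j(X,Y)\cdot\op{id}$ is skew-Hermitian). That is precisely what you need: $P(X,Y)$ and $\tfrac12 B_j(X,Y)\op{id}$ are both skew-Hermitian, so $iR(X,Y)$ is skew-Hermitian and $R\in\Om^2(M,\herm A)$. The same Jacobi trick for $D^K$ also disposes of the $B_j$ terms in your proof that $\nabla R=0$.
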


\begin{proof} 
Assume first as in Remark \ref{rem:demi} that $A = B \otimes \delta$ and $ D_Y = \nabla^B_Y \otimes \op{id} + \op{id} \otimes D_Y^{\delta}$. Then we have a natural identification $\herm A \simeq \herm B$. We set $R = \frac{1}{i} \op{courb} \nabla^B$. Equation (\ref{eq:fausse_Courbure}) follows from Lemma \ref{lem:com}.  We define $\nabla $ as the connection of $\herm B$ induced by $\nabla^B$, so that Equation (\ref{eq:conend}) is satisfied and the properties of $\nabla$  given in the proposition are standard properties.

We can now extend to the result to the case where there is no half-form bundle. Observe first that $R$ and $\nabla$ are uniquely determined by Equations (\ref{eq:fausse_Courbure}) and (\ref{eq:conend}). Since this unicity is also local, the local existence of $R$ and $\nabla$ implies their global existence. But each point of $M$ has a neighborhood admitting a half-form bundle, where we can apply the first part of the proof. 
\end{proof}

\section{The quantization map}  \label{sec:quantization-map}

Consider $D \in \der$. For any $f \in \Ci ( M)$, define the derivative $P_{f,k}$ in the direction of the Hamiltonian vector field $X$ of $f$ 
$$ P_{f,k} = ( \nabla^{L^k}_X \otimes \op{id} + \op{id} \otimes D_X) : \Ci ( M , L^k \otimes A) \rightarrow \Ci ( M , L^k \otimes A ) . $$
Then we set 
$$ Q_k^D(f ) = \Pi_k \bigl( f + \tfrac{i}{k} P_{f,k}  \bigr) : \mathcal{H}_k \rightarrow \mathcal{H}_k $$

\begin{theo} \label{theo:quant}
Let $D \in \der$.
\begin{enumerate}
\item  For any $f \in \Ci (M)$, $\bigl(Q^D_k(f)\bigr)$ is a Toeplitz operator with principal symbol $f$. 
\item For any $f $ and $g \in \Ci (M)$, we have
$$ Q^D_k ( f) Q^D_k ( g) = Q^D_k ( fg)  + \tfrac{1}{2ik} \Pi_k  \{ f, g \}  + \bigo ( k^{-2}) $$
and 
$$ ik [ Q^D_k ( f) , Q^D_k ( g) ] = Q^D_k ( \{ f, g \} ) + \tfrac{1}{k} \Pi_k R (X,Y) + \bigo(k^{-2}), $$
where $X$ and $Y$ are the Hamiltonian vector field of $f$ and $g$ respectively and $R \in  \Om^2 ( M, \herm A)$ is defined as in Proposition \ref{prop:met_com}. 
\item For any $f \in \Ci (M)$ and Toeplitz operator $S$, $(ki [ Q_k^D (f) , S ] )$ is a Toeplitz operator with principal symbol 
$$ \sip ( ki [ Q_k^D (f) , S ] ) = - \nabla_X (\sip (S)).$$ 
where $\nabla $ is the connection of $\herm A$ defined in Proposition \ref{prop:met_com} and $X$ is the Hamiltonian vector field of $f$.  
\end{enumerate}
\end{theo}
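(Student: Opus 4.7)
The overall plan is to reduce all three claims to formal manipulations with the first-order operator $P_{f,k} = \nabla^{L^k}_X \otimes \op{id} + \op{id} \otimes D_X$, together with one analytic input: for any smooth vector field $Y$ on $M$, the family $\bigl(\tfrac{1}{k}\Pi_k \nabla^{L^k}_Y \Pi_k\bigr)$ is a Toeplitz operator, with principal symbol that can be read off by decomposing $Y = Y^{1,0} + Y^{0,1}$. This is the content I expect Section \ref{sec:schw-kern-toepl} to provide via the Schwartz-kernel asymptotics of $\Pi_k$: the almost-holomorphicity of $\Pi_k$ forces $\Pi_k \nabla^{L^k}_{Y^{0,1}} \Pi_k$ to be $\bigo(k^{-\infty})$-small against the almost-holomorphic filtration, while the $(1,0)$-component produces a first-order correction that is of Toeplitz type.

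Granted this lemma, the first statement is immediate: $f + \tfrac{i}{k}P_{f,k}$ is the multiplication by $f$ plus $k^{-1}$ times a first-order operator, so $Q_k^D(f) = \Pi_k f + \bigo(k^{-1})$ in $\To$, hence a Toeplitz operator with principal symbol $f$. The third statement follows by writing $S = \Pi_k h + \bigo(k^{-1})$ with $h = \sip(S)$: the multiplication piece of $Q_k^D(f)$ commutes with $\Pi_k h$ at leading order, so at top order
\[ ik [Q_k^D(f), S] = - [\Pi_k P_{f,k}, \Pi_k h] + \bigo(k^{-1}) .\]
Since $P_{f,k}$ is a derivation of $\Ci(M, L^k \otimes A)$ over $\Ci(M)$, the commutator $[P_{f,k}, h]$ acts as multiplication by the section of $\op{End} A$ that, by construction and by Proposition \ref{prop:met_com}, is precisely $\nabla_X h$. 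This gives the announced principal symbol $-\nabla_X \sip(S)$.

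The second statement is the substantive part. I would expand
\[ Q_k^D(f) Q_k^D(g) = \Pi_k \bigl(f + \tfrac{i}{k}P_{f,k}\bigr) \Pi_k \bigl(g + \tfrac{i}{k}P_{g,k}\bigr) \Pi_k \]
and group terms by their order in $k^{-1}$. The key algebraic input is the commutator identity
\[ [P_{f,k}, P_{g,k}] = P_{[X,Y], k} - ik\, \om(X,Y) + iR(X,Y) + \tfrac{1}{2}B_j(X,Y), \]
obtained by combining the curvature $-ik\om$ of $\nabla^{L^k}$ with Proposition \ref{prop:met_com} applied to $D$ on $A$, the last three terms acting as multiplications. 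Using the symplectic identities $\om(X,Y) = \{f,g\}$ and $[X,Y] = -X_{\{f,g\}}$, the $-ik\om$ piece generates the leading non-trivial correction $\tfrac{1}{2ik}\Pi_k\{f,g\}$ to the product, and in the commutator the Toeplitz term $\tfrac{1}{ik}Q_k^D(\{f,g\})$; the $iR$ piece survives antisymmetrization to give the $\tfrac{1}{ik^2}\Pi_k R(X,Y)$ contribution; the symmetric $B_j$ piece drops out of the commutator.

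The main obstacle is precisely the analytic lemma on $\tfrac{1}{k}\Pi_k \nabla^{L^k}_Y \Pi_k$, together with the companion estimates needed to justify that cross-terms such as $\Pi_k f \Pi_k P_{g,k} \Pi_k$ can be rewritten, modulo $\bigo(k^{-2})$, as a Toeplitz operator with explicit contravariant symbol. Once this is in hand, the rest is careful bookkeeping using the derivation property of $P_{f,k}$ and the curvature identities of Proposition \ref{prop:met_com}; signs and the exact form of the subleading terms must be tracked, but no new conceptual difficulty arises.
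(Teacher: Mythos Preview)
Your outline is in the right spirit, but there is a genuine gap in the commutator computation, and it is precisely the point where the paper has to work hardest.

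You write the key identity
\[
[P_{f,k},P_{g,k}] = P_{[X,Y],k} - ik\,\om(X,Y) + iR(X,Y) + \tfrac{1}{2}B_j(X,Y)
\]
and then claim that ``the symmetric $B_j$ piece drops out of the commutator''. But $B_j(X,Y)$ is \emph{anti}symmetric in $(X,Y)$ (this is immediate from its definition in Lemma~\ref{lem:com}), so it does \emph{not} disappear when you antisymmetrize in $f,g$. If one simply removes the middle projector and computes $\Pi_k\bigl[f+\tfrac{i}{k}P_{f,k},\,g+\tfrac{i}{k}P_{g,k}\bigr]\Pi_k$, the $B_j$ term survives at order $k^{-2}$ and spoils the stated formula. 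The paper's mechanism for killing $B_j$ is different: one writes
\[
Q_k^D(f)Q_k^D(g)-\Pi_k\bigl(f+\tfrac{i}{k}P_{f,k}\bigr)\bigl(g+\tfrac{i}{k}P_{g,k}\bigr)\Pi_k
= \Pi_k\bigl[f+\tfrac{i}{k}P_{f,k},\Pi_k\bigr]\bigl[g+\tfrac{i}{k}P_{g,k},\Pi_k\bigr]\Pi_k =: S(f,g,k),
\]
and the crucial analytic input (Lemma~\ref{lem:2}) is that $\bigl[f+\tfrac{i}{k}P_{f,k},\Pi_k\bigr]\in\Fa_2$, one order better than the naive $\Fa_1$. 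Hence $S(f,g,k)\in\Fa_4$, which already gives the product formula; and for the commutator one computes (Lemma~\ref{lem:6}) that $S(f,g,k)-S(g,f,k)$ has symbol exactly $\tfrac{\hb^2}{2}B_j(X,Y)$, cancelling the $B_j$ coming from Lemma~\ref{lem:5}. Your ``companion estimates on cross-terms'' are not strong enough to replace this: you need the specific improvement of Lemma~\ref{lem:2}, which in turn uses that $f+\tfrac{i}{k}P_{f,k}$ is the Kostant--Souriau combination, not just any first-order operator.

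A smaller point: in your argument for part~3 you assert $ik[Q_k^D(f),S]=-[\Pi_kP_{f,k},\Pi_kh]+\bigo(k^{-1})$. This is not right as written: the contribution $ik[\Pi_kf\Pi_k,\Pi_kh\Pi_k]$ is $\bigo(1)$, not $\bigo(k^{-1})$, when $h$ is $\op{End}A$-valued, so it cannot be discarded. The correct statement (which the paper imports from \cite{oim_symp_quant}) requires combining this term with the $P_{f,k}$ contribution; the commutator $[P_{f,k},h]=\nabla_Xh$ alone does not give the full principal symbol. Finally, for the product formula you will need to use the half-form relation~\eqref{eq:der2} explicitly to rewrite $P_{fg,k}$ in terms of $fP_{g,k}+gP_{f,k}$ (as in Lemma~\ref{lem:4}); this is where the metaplectic correction actually enters, and it is not captured by the generic ``derivation property of $P_{f,k}$''.
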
 

The theorem will be proved in Section \ref{sec:proof}.  
We can now defined a subprincipal symbol map  $\sis^D : \Tosc \rightarrow \Ci ( M , \op{End} A)$ by 
\begin{gather} \label{eq:defsis}
\sis ^D (T) := \sip ( k ( T - Q^D ( \sip (T)))), \qquad \forall T \in \Tosc
\end{gather}
so that we have 
\begin{gather} \label{eq:4}  
T = Q^D(f) + k^{-1} \Pi g \text{ modulo } k^{-2} \To \quad  \text{ with } f = \sip(T) \text{ and } g = \sis^D  (T).
\end{gather}


\begin{cor} 
For any $D \in \der$, $\sis^D$ belongs to $\synorsp$. The two form $R$ and the connection $\nabla$ corresponding to $\sis^D$, cf. Proposition \ref{prop:commutator}, are the ones corresponding to $D$, cf. Proposition \ref{prop:met_com}.  Furthermore the map sending $D$ into $\sis^D$ is an affine space isomorphism from $\der$ to $\synorsp$. 
\end{cor}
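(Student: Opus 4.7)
The plan is to verify, in order, the three claims: membership in $\synorsp$, matching of $(R,\nabla)$, and affine isomorphism, all starting from (\ref{eq:4}).

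First, I would check the three axioms defining $\synorsp$ using (\ref{eq:4}). Axiom \ref{item:def} is immediate since $Q^D(0)=0$. For axiom \ref{item:adj}, the key point is that $Q^D_k(f)^* = Q^D_k(\bar f)$ as an operator on $\mathcal{H}_k$: the multiplication operator obviously behaves like that, and the formal adjoint of $\tfrac{i}{k}P_{f,k}$ on $L^2(M,L^k\otimes A)$ equals $\tfrac{i}{k}P_{\bar f,k}$ because the prequantum connection on $L^k$ and the derivation $D_X$ are both skew-adjoint on symplectic real vector fields $X$ (the latter by \eqref{eq:der3} combined with $\mathcal{L}_X\muliouv=0$), which extends to the complexified $X_{\bar f}=\overline{X_f}$. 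Taking adjoints in (\ref{eq:4}) then yields $\sis^D(T^*)=\sis^D(T)^*$. For axiom \ref{item:prod}, write $T=Q^D(f_1)+k^{-1}\Pi g_1 \mod k^{-2}\To$ and $S=Q^D(f_2)+k^{-1}\Pi g_2 \mod k^{-2}\To$; multiply out and apply Theorem \ref{theo:quant}(2) to the $Q^D(f_1)Q^D(f_2)$ term, while cross terms of type $Q^D(f)\cdot k^{-1}\Pi g$ reduce modulo $k^{-2}\To$ to $k^{-1}\Pi(fg)$ because $Q^D(f)=\Pi f \mod k^{-1}\To$ and $\Pi f\,\Pi g = \Pi(fg) \mod k^{-1}\To$. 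Reading off the coefficient of $\hb$ gives exactly axiom \ref{item:prod}.

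Second, to identify the $R$ and $\nabla$ attached to $\sis^D$ by Proposition \ref{prop:commutator}, I would compute $\sis^D(ik[T,S])$ with the same representatives of $T$ and $S$. Modulo $k^{-2}\To$,
\begin{align*}
ik[T,S] = &\; ik[Q^D(f_1),Q^D(f_2)] + i[Q^D(f_1),\Pi g_2] \\
& + i[\Pi g_1,Q^D(f_2)] + ik^{-1}[\Pi g_1,\Pi g_2].
\end{align*}
Theorem \ref{theo:quant}(2) converts the first commutator into $Q^D(\{f_1,f_2\}) + k^{-1}\Pi R(X_1,X_2)$ with $R$ as in Proposition \ref{prop:met_com}; Theorem \ref{theo:quant}(3) turns the two mixed commutators into $k^{-1}\Pi(-\nabla_{X_1}g_2)$ and $k^{-1}\Pi(\nabla_{X_2}g_1)$ with $\nabla$ as in Proposition \ref{prop:met_com}; and $ik^{-1}[\Pi g_1,\Pi g_2]=ik^{-1}\Pi[g_1,g_2] \mod k^{-2}\To$ since $[\Pi g_1,\Pi g_2]=\Pi[g_1,g_2]+\bigo(k^{-1})$. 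Collecting the $k^{-1}\Pi(\cdots)$ terms gives
$$\sis^D(ik[T,S]) = R(X_1,X_2)-\nabla_{X_1}g_2+\nabla_{X_2}g_1+i[g_1,g_2],$$
which is Proposition \ref{prop:commutator} with the same $R$ and $\nabla$ as in Proposition \ref{prop:met_com}. By the uniqueness built into Proposition \ref{prop:commutator}, these are the objects attached to $\sis^D$.

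Third, I would establish the affine isomorphism by comparing the two affine structures (Propositions \ref{prop:affine} and \ref{prop:der}). If $D'=D+\tfrac{1}{i}\al$ with $\al\in\Om^1(M,\herm A)$, then $P'_{f,k}=P_{f,k}+\tfrac{1}{i}\al(X_f)$, hence $Q^{D'}_k(f)=Q^D_k(f)+k^{-1}\Pi_k\al(X_f)$, so (\ref{eq:defsis}) gives $\sis^{D'}(T)=\sis^D(T)-\al(X_f)$. Under the symplectic duality $\al = \om(V,\cdot)$, the identity $df(V)=\om(X_f,V)=-\al(X_f)$ yields $\sis^{D'}(T)=\sis^D(T)+df(V)$, which is exactly the action of $V$ on subprincipal symbols described after Proposition \ref{prop:affine}. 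Hence $D\mapsto\sis^D$ is a morphism of affine spaces whose linear part is the prescribed symplectic-duality identification $\Om^1(M,\herm A)\xrightarrow{\sim}\Ci(M,TM\otimes\herm A)$; since this linear part is a bijection, so is the affine map.

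The main obstacle is the second step: one must confirm that the $R$ and $\nabla$ extracted from the commutator formula $\sis^D(ik[T,S])$ coincide with the intrinsic $R$ and $\nabla$ of Proposition \ref{prop:met_com}, which requires applying Theorem \ref{theo:quant} in the precise formulation where the right-hand sides are expressed in terms of the data of $D$, and then invoking the uniqueness inherent in the normal form of Proposition \ref{prop:commutator}. The sign-check in the third step (to ensure the linear part of the map matches the claimed identification rather than its opposite) is the only other delicate point.
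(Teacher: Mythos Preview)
Your proof is correct and follows essentially the same strategy as the paper's, only with more detail: the paper simply cites the three parts of Theorem \ref{theo:quant} for axioms \ref{item:def}--\ref{item:prod} and for the identification of $R,\nabla$, and then asserts that $D\mapsto\sis^D$ is an affine morphism between spaces whose directing vector spaces are identified through $\om$. Your explicit expansion of $ik[T,S]$ and the sign-check for the linear part of the affine map are exactly what the paper leaves implicit.
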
 

\begin{proof}  
$\sis^D$ clearly satisfies Conditions \ref{item:def} and \ref{item:adj}. Condition \ref{item:prod} follows from the first equation of the second assertion of Theorem \ref{theo:quant}.
The fact that the two form $R$ and the covariant derivative $\nabla$ corresponding to $D$ and $\sis^D$ are the same follows from the second and third assertion of Theorem \ref{theo:quant}. Finally, recall that both $\der $ and $\synorsp$ are affine spaces directed respectively by $\Om^1 (M, \herm A)$ and $\Ci (M, TM \otimes \herm A)$, cf. Propositions \ref{prop:affine}, \ref{prop:der}. These vector spaces are isomorphic through $\om$. The map sending $D$ to $\sis^D$ is a morphism of affine spaces. 
\end{proof}

\section{Schwartz kernel of Toeplitz operators} \label{sec:schw-kern-toepl}

\subsection*{The algebra $\Fa_0$} \label{sec:algebra-fa_0} 
We briefly recall the definition and properties of the algebra $\Fa_0$. The reader is referred to  \cite{oim_symp_quant} for a more detailed exposition. $\Fa_0$ depends on the data $M$, $L$, $A$, $j$. By definition, $\Fa_0$ consists of families $( P_k : \Ci (M , L^k \otimes A ) \rightarrow \Ci ( M , L^k \otimes A),\; k \in \N^*)$ of operators whose Schwartz kernels are smooth and of the following form: for any $N$, we have uniformly on $M^2$
\begin{gather} \label{eq:devas}
 P_k ( x, y)  = \Bigl( \frac{k}{2 \pi }\Bigr) ^n E^k(x,y)   \sum _{\ell \in \Z \cap [ -N, N/2 ]} k^{-\ell} f_\ell (x,y)   +  \bigo ( k^{n- (N+1)/2} ) . 
\end{gather} 
where $E$ and the $f_{\ell}$'s are sections of $E \boxtimes \con{E}$ and $A \boxtimes \con{A}$ respectively which satisfy the following conditions. For any $x, y \in M$,  $E(x,x) = 1$, $|E(x,y)| <1 $ if $x \neq y$ and $\con{E}( x,y) = E(y,x)$. For any $ Z \in \Ci  ( M , T^{1,0}M)$,  $\nabla_{(\overline{Z}, 0)} E $ vanishes to second order along the diagonal of $M^2$. For any negative $\ell$, $f_{\ell}$ vanishes to order $-3 \ell$ along the diagonal. It is also required that the successive derivatives of $P_k (x,y)$ are uniformly slowly increasing as $k$ tends to $\infty$, cf. Section 2.2 of \cite{oim_symp_quant} for a more precise formulation. 

For any $m \in \N$, define $\Fa_m$ as the subspace of $ \Fa_0$ consisting of families with a Schwartz kernel in $\bigo ( k^{n-m/2})$. By Theorem 3.3 of \cite{oim_symp_quant}, $\Fa_0$ is an algebra and $\Fa_m . \Fa_p \subset \Fa_{m+p}$ for any $m$ and $p$. Furthermore, we can describe the quotients $\Fa_m / \Fa_{m+1}$ by a convenient symbol and compute the corresponding products as follows. First $(P_k) \in \Fa_m$ if and only if in the asymptotic expansion (\ref{eq:devas}), for any $\ell$ such that $-m \leqslant \ell \leqslant m/2$, the coefficient $f_{\ell}$ vanishes to order $m - 2 \ell$ along the diagonal. If it is the case, the symbol of $(P_k)$ is defined by 
$$ \si_m ( P_k) = \sum_{\ell \in \Z \cap [-m, m/2]}  \hb^\ell [f_{\ell}] $$ 
where $[f_\ell] \in \Ci ( M , S^{m-2\ell} ( T^* M ) \otimes \op{End} A) $ is the linearization of $f_{\ell}$ along the diagonal at order $m- 2 \ell$. More explicitly, if $\partial_1, \ldots, \partial_n$ is a local frame of $T^{1,0} M$ and $(z_i)$ is the dual frame of $(T^{1,0} M)^*$, we set
\begin{gather} \label{eq:symb}
 [f_\ell]  ( z, \con{z}) = \sum_{|\al| + |\be| = m - 2 \ell } \frac{1}{\al ! \be !} \bigl( ( \con{\partial}^\be \boxtimes \partial^\al)
 f_\ell \bigr) |_{\op{diag} M} z^ \al \con{z}^\be .
\end{gather}
Then clearly $\si_m ( P_k) = 0$ if and only if $(P_k) \in \Fa_{m+1}$. Furthermore if $(P_k) \in \Fa_p$ and $(Q_k) \in \Fa_q$ then the symbol of $(P_kQ_k) \in \Fa_{p+q}$ is equal to $\si_p (P) \star \si_q (Q)$ where $\star$ is given by
\begin{gather} \label{eq:prod_formal}
 (e \star g) (\hb, z, \con{z} ) =  \Bigr[  \exp ( \hb \De ) \bigl( e ( \hb, -u, \con{z} + \con{u} ) g ( \hb , z + u , - \con {u} ) \bigr) \Bigl]_{u = \con{u} = 0 }
\end{gather}
In this formula,  $\Delta = \sum \partial_i \con{\partial}_i $ acts on the variables $u, \bar{u}$.

\subsection*{Toeplitz operators} 

Denote by $\Pi_k$ the orthogonal projector of $\Ci ( M , L^k \otimes A)$ onto $\mathcal{H}_k$. Recall that the family $(\mathcal{H}_k)$ is chosen in such a way that $( \Pi_k)$ belongs to $\Fa_0$ and has symbol $\sip ( \Pi) = 1_A$. We have the following characterization of Toeplitz operators: 
$$T \in \To \quad \Leftrightarrow \quad T \in \Fa_0 \text{ and } \Pi T \Pi = T.$$ 
This in particular gives a description of the Schwartz kernel of a Toeplitz operator. 
Furthermore, for any  $T \in \To$ and $m \in \N$, $\si_{\op{cont}} (T) = \bigo( \hb ^m) $ if and only if $T \in \Fa_{2m}$. If it is the case, we have $ \si_{2m} (T) = \hb^m f$ and  $\si_{\op{cont}} ( T) = \hb ^m f + \bigo ( \hb^{m+1})$ for the same section $f \in \Ci ( M ,\op{End } A)$.  Another useful property is that for any odd $p$, $\Fa_p \cap \To = \Fa_{p+1} \cap \To$.

Introduce a vector field $X$ of $M$ and a derivative $D_X : \Ci ( M , A) \rightarrow \Ci ( M, A)$ in the direction of $X$  preserving the metric of $A$. Denote by $P_{X,k}$ the derivative
$$ P_{X,k} = ( \nabla^{L^k}_X \otimes \op{id} + \op{id} \otimes D_X) : \Ci ( M , L^k \otimes A) \rightarrow \Ci ( M , L^k \otimes A ) . $$
\begin{lemme} \label{lem:1}
The operator family $( \frac{i}{k} P_{X,k} \Pi_k)$ belongs to $\Fa_1 $. Its symbol is the section $\tau_Y 1_A$ where $\tau_Y \in \Ci ( M, T^*M)$ is given by $ \tau_Y = \om ( \cdot , Y^{1,0})$.
\end{lemme}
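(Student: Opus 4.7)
The plan is to compute the Schwartz kernel of $(\tfrac{i}{k}P_{X,k}\Pi_k)$ directly by applying the first-order differential operator $P_{X,k}$ in the $x$-variable to the asymptotic expansion (\ref{eq:devas}) of $\Pi_k(x,y)$. Working locally in a unitary frame $e_L$ of $L$ with connection form $\al_L$ and writing $E = a\, e_L\boxtimes\bar e_L$, the key observation is that $\nabla^{L^k}_X$ acting on $E^k$ brings down a factor of $k$, which cancels the prefactor $\tfrac{1}{k}$: one checks $\tfrac{1}{k}\nabla^{L^k}_{X,x}E^k = \nu(x,y)\,E^k$ with $\nu := X_x(\log a)+\al_L(X)$. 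Substituting the expansion of $\Pi_k$ yields
\begin{equation*}
\tfrac{i}{k}P_{X,k}\Pi_k(x,y) = \Bigl(\tfrac{k}{2\pi}\Bigr)^{\!n}\!E^k(x,y)\sum_\ell k^{-\ell}\bigl[i\nu(x,y)\,f^\Pi_\ell(x,y) + iD_X f^\Pi_{\ell-1}(x,y)\bigr],
\end{equation*}
which has the form (\ref{eq:devas}). The vanishing conditions on the negative-index coefficients in the expansion of $\Pi_k\in\Fa_0$ transfer to the corresponding coefficients here, so the kernel lies in $\Fa_0$.

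To upgrade membership to $\Fa_1$, I need the leading coefficient $g_0 := i\nu f^\Pi_0 + iD_X f^\Pi_{-1}$ to vanish on the diagonal $\diag M$. Since $f^\Pi_{-1}$ vanishes to order $3$ there, the $D_X$-term vanishes to order $\geq 2$, so it suffices to prove $\nu|_{\diag M}=0$. Decomposing $X = X^{1,0}+X^{0,1}$: the assumption that $\nabla_{(\bar Z,0)}E$ vanishes to second order on $\diag M$ gives the vanishing of the $(0,1)$-part of $\nu$ on $\diag M$. For the $(1,0)$-part, I exploit the hermitian symmetry $\bar E(x,y)=E(y,x)$ combined with $E(x,x)=1$: these force a ``Bergman-type'' local normal form
\begin{equation*}
a(z,w) = \exp\bigl(\bar w\cdot z - \tfrac{1}{2}(|z|^2+|w|^2)\bigr) + O(|z-w|^3)
\end{equation*}
around any chosen point $x_0$, once one picks almost-holomorphic coordinates $(z_j)$ centered at $x_0$ and a unitary frame of $L$ with $\al_L = -\tfrac{1}{2}\sum \bar z_j\,dz_j + \tfrac{1}{2}\sum z_j\,d\bar z_j + O(|z|^2)$. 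A direct substitution then gives $\nu(z,w) = \sum_j X_j(\bar w_j-\bar z_j) + O(|z-w|^2)$, hence $\nu(x_0,x_0)=0$, and pointwise the whole leading coefficient vanishes on $\diag M$.

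For the symbol, the formula (\ref{eq:symb}) with $m=1$, $\ell=0$ requires the first-order Taylor expansion of $g_0$ along the diagonal. Only $i\nu f^\Pi_0$ contributes (the $D_X f^\Pi_{-1}$ term vanishes to second order), and since $f^\Pi_0|_{\diag M}=1_A$, this contribution reduces to $i$ times the first-order jet of $\nu$. From the explicit form above, $[g_0](z,\bar z) = -i\sum_j X_j\bar z_j\cdot 1_A$, which using $\om=i\sum dz_j\wedge d\bar z_j$ at $x_0$ is identified intrinsically with the 1-form $\tau_X(\cdot)\cdot 1_A$ where $\tau_X = \om(\cdot,X^{1,0})$. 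The main obstacle is establishing the local normal form for $E$ in step 2, which encodes both the second-order antiholomorphic vanishing and the diagonal normalization and makes the pointwise cancellation manifest; once it is in hand, all the subsequent computations are mechanical.
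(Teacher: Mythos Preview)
Your argument is correct and fills in what the paper treats as a black box: the paper's own proof consists entirely of the observation that the Schwartz kernel of $\tfrac{i}{k}P_{X,k}\Pi_k$ is $(\tfrac{i}{k}P_{X,k}\boxtimes\op{id})\Pi_k$ together with a citation to Lemma~2.19 of \cite{oim_symp_quant}, where the effect of a first-order differential operator on an element of $\Fa_0$ is worked out once and for all. You instead carry out that kernel computation explicitly here, which makes the argument self-contained at this point but is considerably longer than the one-line citation.

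One comment on your justification of the $(1,0)$-part of $\nu|_{\diag M}=0$: the Hermitian symmetry $\bar E(x,y)=E(y,x)$ and the normalization $E(x,x)=1$ alone do \emph{not} force the Bergman-type normal form you write down; you also implicitly need the second-order vanishing of $\nabla_{(\bar Z,0)}E$, transported via the symmetry, to control the holomorphic first derivatives. It is cleaner to argue directly: from $\bar\partial_{j,x}\log a+\al_L(\bar\partial_j)=O(|x-y|^2)$, conjugating and using $\overline{\log a}(x,y)=\log a(y,x)$ and $\overline{\al_L}=-\al_L$ gives $\partial_{j,y}\log a-\al_L(\partial_j)=O(|x-y|^2)$; then differentiating $\log a(x,x)=0$ yields $\partial_{j,x}\log a=-\partial_{j,y}\log a$ on $\diag M$, whence $\partial_{j,x}\log a+\al_L(\partial_j)=0$ there. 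This delivers both $\nu|_{\diag M}=0$ and its first jet $[\nu]=-\sum X_j\bar z_j$ without ever constructing the full normal form, and avoids the appearance of an unjustified step.
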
 

\begin{proof} 
The Schwartz kernel of $ \frac{i}{k} P_{X,k} \Pi_k $ being $( \frac{i}{k} P_{X,k} \boxtimes \op{id} )\Pi_k$, the result is a particular case of Lemma 2.19 in \cite{oim_symp_quant}. 
\end{proof}

\begin{lemme}  \label{lem:2}
Assume that $X$ is the Hamiltonian vector field of $f \in \Ci (M)$. 
Then the family $( \bigl[  f + \frac{i}{k} P_{X,k} , \Pi_k  \bigr] )$ belongs to $\Fa_2 $. Using the same notations as in Equation (\ref{eq:symb}), its symbol is
$$ \Bigl( \sum_{i,j = 1} ^n \om ( \con{\partial}_i, [ \con{\partial}_j,X] ) \con{z}_i \con{z}_j - \om( \partial_i, [\partial_j, X]) z_i z_j \Bigr) 1_A  .$$
\end{lemme}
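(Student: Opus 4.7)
The plan is to write down the Schwartz kernel of $[A,\Pi_k]$ (with $A=f+\tfrac{i}{k}P_{X,k}$), and then apply the linearization formula (\ref{eq:symb}).

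\textit{Kernel formula.} Since $X$ is Hamiltonian, it preserves the Liouville measure; combined with the Hermiticity of $\nabla^{L^k}$ and of $D_X$, this gives $P_{X,k}^t=-P_{X,k}$ for the formal transpose. Integrating by parts, I would establish
\[
  [A,\Pi_k](x,y) \;=\; (f(x)-f(y))\,\Pi_k(x,y) \;+\; \frac{i}{k}\bigl((P_{X,k})_x + (P_{X,k})_y\bigr)\Pi_k(x,y),
\]
where the subscripts indicate the variable on which each copy of $P_{X,k}$ acts.

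\textit{Cancellation at $\Fa_1$.} Each summand of the kernel formula lies a priori in $\Fa_1$. Lemma \ref{lem:1} gives $\si_1(\tfrac{i}{k}(P_{X,k})_x\Pi_k)=\tau_X\,1_A$; a formal-adjoint calculation (using $\Pi_k^*=\Pi_k$ and $P_{X,k}^*=-P_{X,k}$) yields $\si_1(\tfrac{i}{k}(P_{X,k})_y\Pi_k)=\con{\tau_X(\con z,z)}^{T}\cdot 1_A$. Combined with the first-order Taylor expansion of $(f(x)-f(y))$, which by (\ref{eq:symb}) contributes $(\con\partial f(\con z)-\partial f(z))\,1_A$, and using the Hamiltonian identity $df=\iota_X\om$ (which forces $\tau_X$ to be purely of type $(0,1)$ and equal to $-\con\partial f(\con z)$), the three contributions sum to zero. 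This shows $[A,\Pi_k]\in\Fa_2$.

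\textit{Computation of $\si_2$.} To compute $\si_2([A,\Pi_k])$ I would extend both Taylor expansions to second order. For $(f(x)-f(y))\Pi_k(x,y)$, the relevant contributions come from the quadratic Taylor expansion of $f(x)-f(y)$ along the diagonal, paired with the leading coefficient $g_0$ of $\Pi_k$, plus the linear part of $f(x)-f(y)$ paired with the first-order jet of $g_0$ along the diagonal. For $\tfrac{i}{k}((P_{X,k})_x+(P_{X,k})_y)\Pi_k$, one uses the near-holomorphy $\nabla_{(\con Z,0)}E=O(\xi^2)$ together with the symmetric property obtained by swapping the two factors, to compute the symbol at order $2$ from the first-order behaviour of $\tau_X$ along the diagonal. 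The decisive algebraic input is $\mathcal{L}_X\om=0$ (equivalent to $X$ being Hamiltonian): it allows to rewrite second derivatives of $f$ in terms of $\om$ and Lie brackets, via for example
\[
 \partial_i\partial_j f \;=\; \om([\partial_i,X],\partial_j) \;+\; \om(X,[\partial_i,\partial_j]) \;+\; (\mathcal{L}_{\partial_i}\om)(X,\partial_j),
\]
and the corresponding $(0,2)$ and mixed identities. The $(1,1)$-type of $\om$, together with these identities, guarantees that the quadratic form $\sum\om(\con\partial_i,[\con\partial_j,X])\con z_i\con z_j$ is symmetric in $i,j$, and that the mixed $z_i\con z_j$ contributions from $(f(x)-f(y))\Pi_k$ cancel against the corresponding contributions from $\tfrac{i}{k}\tilde P_X\Pi_k$, leaving only the pure $(2,0)$ and $(0,2)$ polynomial announced in the lemma.

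\textit{Main obstacle.} The subtle part is the bookkeeping of the subleading contributions: the $k^{-1}g_1$ coefficient of $\Pi_k$ enters $\si_2$ at the $\ell=1$ level (as a constant in $z,\con z$), and the second-order near-diagonal behaviour of the phase $E^k$ enters through $\tilde P_X E$ evaluated to second order. I expect the $g_1$-contribution to drop out since it multiplies $(f(x)-f(y))$ in one summand and a diagonal-vanishing quantity in the other, while the $E$-contribution to combine with the second derivatives of $f$ to produce precisely the Lie brackets $[\partial_j,X]$ and $[\con\partial_j,X]$ in the final formula. The essential miracle is that, after all cancellations, the answer has no $(1,1)$-piece, which is the symbol-level counterpart of the Hamiltonian condition $\mathcal{L}_X\om=0$.
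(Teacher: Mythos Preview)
The paper does not actually prove this lemma; it simply records that it is a particular case of Lemma~3.5 in \cite{oim_symp_quant}. Your direct approach --- write the Schwartz kernel of $[f+\tfrac{i}{k}P_{X,k},\Pi_k]$ explicitly and read off the symbols via the linearization formula~(\ref{eq:symb}) --- is therefore the substantive route, and it is essentially what is carried out in that reference.

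Your kernel formula and the $\Fa_1$ cancellation are correct. One small inaccuracy: the vanishing of the $z$-component of $\tau_X$ comes from $T^{1,0}M$ being Lagrangian, not from the Hamiltonian condition; the latter is what then identifies $\tau_X$ with $-\bar\partial f(\bar z)$ and produces the cancellation against the linearization of $f(x)-f(y)$.

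The part you flag as the ``main obstacle'' is indeed where all the content lies, and as written it is a plan rather than a proof. The ingredients you list are the right ones: the second-order Taylor jet of $f(x)-f(y)$, the first-order off-diagonal jet of the leading kernel coefficient of $\Pi_k$, and the second-order vanishing of $\nabla_{(\bar Z,0)}E$ along the diagonal. Your identity for $\partial_i\partial_j f$ is correct and, together with its $(0,2)$ analogue and $\mathcal{L}_X\om=0$, is precisely what converts second derivatives of $f$ into the brackets $[\partial_j,X]$, $[\bar\partial_j,X]$. What is missing is the actual execution: one must use that the leading coefficient of $\Pi_k$ restricts to $1_A$ on the diagonal and that its first $\bar\partial^x$- and $\partial^y$-jets are constrained (through $\Pi_k^2=\Pi_k$ and the near-holomorphy of $E$) so that the mixed $z_i\bar z_j$ pieces cancel; and one must check that the $\hbar g_1$ contribution drops out for the reason you indicate. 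Until those computations are written down, the $\si_2$ step remains a sketch.
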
 

This is a particular case of Lemma 3.5  in \cite{oim_symp_quant}. 
Let us deduce a first consequence of Lemma \ref{lem:1}. 
Consider a second vector field $Y$ of $M$ and a derivation $D_Y$ of $\Ci( M, A)$. Let $P_{Y, k}$ be the corresponding operator.

\begin{lemme} \label{lem:3}
The operator family $( \frac{1}{k^2} \Pi_k P_{X,k} P_{Y,k} \Pi_k)$ belongs to $\Fa_2 $. Its symbol is $ -i \hb \om ( X^{0,1}, Y^{1,0})$. 
\end{lemme}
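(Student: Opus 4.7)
The strategy is to express the operator as a product of two elements of $\Fa_1$ and read off its $\Fa_2$-symbol using the star product (\ref{eq:prod_formal}). Writing $\tfrac{1}{k^2}=-(i/k)^2$, I rewrite
$$
\tfrac{1}{k^2}\Pi_k P_{X,k}P_{Y,k}\Pi_k \;=\; -\bigl(\tfrac{i}{k}\Pi_k P_{X,k}\bigr)\bigl(\tfrac{i}{k}P_{Y,k}\Pi_k\bigr).
$$
By Lemma \ref{lem:1} the right-hand factor lies in $\Fa_1$ with symbol $\tau_Y\cdot 1_A$, $\tau_Y=\om(\cdot,Y^{1,0})$. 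For the left-hand factor, I integrate by parts against the Liouville measure $\muliouv$: since $\nabla^{L^k}$ is Hermitian and any failure of $D_X$ to be Hermitian is a smooth endomorphism of $A$, one obtains
$$
\tfrac{i}{k}\Pi_k P_{X,k} \;=\; \bigl(\tfrac{i}{k}P_{X,k}\Pi_k\bigr)^{\!*} + \tfrac{1}{k}\Pi_k\, g_X
$$
for some $g_X\in\Ci(M,\op{End} A)$ built from the $\muliouv$-divergence of $X$ and the anti-Hermitian part of $D_X$. The correction is a Toeplitz operator of norm $\bigo(k^{-1})$, hence lies in $\Fa_2$, and composing with the $\Fa_1$ factor on its right pushes it into $\Fa_3$, contributing nothing to the $\Fa_2$-symbol. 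Thus, modulo $\Fa_3$,
$$
\tfrac{1}{k^2}\Pi_k P_{X,k}P_{Y,k}\Pi_k \;\equiv\; -\bigl(\tfrac{i}{k}P_{X,k}\Pi_k\bigr)^{\!*}\bigl(\tfrac{i}{k}P_{Y,k}\Pi_k\bigr).
$$

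Both factors on the right live in $\Fa_1$. The symbol of the adjoint $(\tfrac{i}{k}P_{X,k}\Pi_k)^{*}$ is obtained from $\tau_X\cdot 1_A$ by swapping the formal variables $z\leftrightarrow\bar z$ and conjugating coefficients, which is the standard adjoint rule in the calculus of \cite{oim_symp_quant}. Since $\tau_X=\om(\cdot,X^{1,0})$ is a $(0,1)$-form (purely in $\bar z$ at each base point), its adjoint symbol is the $(1,0)$-form $\om(\cdot,X^{0,1})$ (purely in $z$). Writing these locally as $\sum_l\bar c_l z_l$ and $\sum_m d_m\bar z_m$, with $c_l=\om(\con\partial_l,X^{1,0})$ and $d_m=\om(\con\partial_m,Y^{1,0})$ in a local holomorphic frame, the substitutions in (\ref{eq:prod_formal}) yield
$$
\overline{\tau_X}(-u,\bar z+\bar u)\cdot\tau_Y(z+u,-\bar u) \;=\; \sum_{l,m}\bar c_l\, d_m\, u_l\bar u_m .
$$
A single application of $\hb\De=\hb\sum_i\partial_i\con\partial_i$ on the $u,\bar u$ variables contracts the indices and produces the constant $\hb\sum_l\bar c_l d_l$; $\De^2$ kills this, and setting $u=\bar u=0$ annihilates the leftover $u_l\bar u_m$. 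Hence the $\Fa_2$-symbol of the right-hand side equals $\hb\sum_l\bar c_l d_l\cdot 1_A$.

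The remaining step is the identification $\sum_l\om(\partial_l,X^{0,1})\om(\con\partial_l,Y^{1,0})=i\,\om(X^{0,1},Y^{1,0})$. Choosing the frame $(\partial_l)$ of $T^{1,0}M$ to be orthonormal at a chosen point, so that $\om(\partial_l,\con\partial_m)=i\delta_{lm}$, and writing $X^{1,0}=\sum X^l\partial_l$, $Y^{1,0}=\sum Y^l\partial_l$, a direct expansion gives $\sum_l\bar c_l d_l=\sum_l\bar X^l Y^l=i\,\om(X^{0,1},Y^{1,0})$; since both sides are tensorial this equality holds pointwise on $M$. Combining with the overall $-1$ from the first step yields the claimed symbol $-i\hb\,\om(X^{0,1},Y^{1,0})\cdot 1_A$. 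The main obstacles are the careful bookkeeping of the sign and conjugation conventions for the symbol-level adjoint, and the check that the correction $\tfrac{1}{k}\Pi_k g_X$ really lives in $\Fa_2$ and not merely in $\Fa_1$; both reduce to mechanical unwinding of the definitions (\ref{eq:devas})--(\ref{eq:symb}).
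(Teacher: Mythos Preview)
Your argument is correct and follows essentially the same route as the paper: factor the operator as $-(\tfrac{i}{k}\Pi_k P_{X,k})(\tfrac{i}{k}P_{Y,k}\Pi_k)$, identify the left factor with the adjoint of $\tfrac{i}{k}P_{X,k}\Pi_k$ up to a zeroth-order correction in $\Fa_2$, then compute $\overline{\tau}_X\star\tau_Y$ via (\ref{eq:prod_formal}) and reduce the contraction to $i\,\om(X^{0,1},Y^{1,0})$ in an orthonormal frame. One small remark: in the standing hypotheses before Lemma~\ref{lem:1} the derivation $D_X$ is already assumed to preserve the metric of $A$, so your ``anti-Hermitian part of $D_X$'' term is absent and the correction $g_X$ is exactly $-i\,\op{div}_{\muliouv} X$; this only simplifies your bookkeeping and does not affect the argument.
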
 

\begin{proof} 
By Lemma \ref{lem:1},  $( \frac{i}{k} P_{X,k} \Pi_k)$ belong to $\Fa_1 $ and its symbol is $\tau_X 1_A$. Taking adjoint and using that $(\tfrac{i}{k} \Pi_k \op{div} X ) \in \Fa_2 $, we get that $ (\Pi_k \frac{i}{k}  P_{X,k}  )$ belongs  to $\Fa_1 $. Furthermore its symbol is $ \con{\tau}_X 1_A$. Consequently, $( - \frac{1}{k^2} \Pi_k P_{X,k} P_{Y,k} \Pi_k)$  belongs  to $\Fa_2 $. By Equation (\ref{eq:prod_formal}), its symbol is 
$$ \con{\tau}_X 1_A \star \tau_Y 1_A = \hb \sum \om ( \partial_i, X^{0,1} ) \om ( \con{\partial}_i , Y^{1,0}) $$
where $(\partial_i)$ is an orthonormal frame of $T^{1,0}M$. Since $i X^{0,1}= \sum \om ( \partial_i, X^{0,1} ) \con{\partial}_i$, we obtain that $ \con{\tau}_X 1_A \star \tau_Y 1_A = i \hb \om ( X^{0,1}, Y^{1,0})$. 
\end{proof} 



\section{Proof of Theorem \ref{theo:quant}}  \label{sec:proof}

Consider $ D \in \mathcal{D} $ as in section \ref{sec:metapl-corr}. 
Recall that for any function $f \in \Ci ( M)$, $ Q_k^D ( f) =$ $ \Pi_k \bigl( f + \tfrac{i}{k } P_{f,k}  \bigr) \Pi_k$  where $P_{f,k} =  \nabla^{L^k}_{X_f} \otimes \op{id} + \op{id} \otimes D_{X_f}$.

The fact that $Q_k^D(f)$ is a Toeplitz operator with principal symbol $f$ was already observed in \cite{oim_symp_quant}. Let us recall the proof since it is an easy consequence of Lemma \ref{lem:1}. Since $( \frac{i}{k} P_{X,k} \Pi_k)$ belongs to $\Fa_1 $, the same holds for $(\Pi_k \frac{i}{k} P_{X,k} \Pi_k)$. So $(\Pi_k \frac{i}{k} P_{X,k} \Pi_k)$ is a Toeplitz operator in $\To \cap \Fa_1 = \To \cap \Fa_2$. Consequently $$Q^D_k(f) = \Pi_k f \Pi_k \mod \To \cap \Fa_2,$$ so that $Q^D(f)$ is a Toeplitz operator with principal symbol $f$. 
 This shows the first assertion of Theorem \ref{theo:quant}. The third assertion of Theorem \ref{theo:quant} has already been proved in Theorem 5.8 of \cite{oim_symp_quant}. It remains to prove the second assertion.

\begin{lemme} \label{lem:4} 
For any $f, g \in \Ci ( M )$, we have
$$\Pi_k \bigl(  f + \tfrac{i}{k} P_{f,k} \bigr) \bigl( g + \tfrac{i}{k} P_{g,k} \bigr) \Pi_k = \Pi_k \bigl( fg + \tfrac{i}{k} P_{fg, k } +  \tfrac{i}{2k}  X. g  \bigr)\Pi_k  \mod \Fa_4 \cap \To.$$
with $X$ the Hamiltonian vector field of $f$. 
\end{lemme}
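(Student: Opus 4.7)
The plan is to expand both sides of the claimed identity, cancel the matching pieces, and show that the remaining discrepancy is a Toeplitz operator in $\Fa_3$; the parity property $\Fa_3\cap\To = \Fa_4\cap\To$ will then close the argument. First I would expand the left-hand side into
$$\Pi_k(fg)\Pi_k + \tfrac{i}{k}\Pi_k f P_{g,k}\Pi_k + \tfrac{i}{k}\Pi_k P_{f,k}g\,\Pi_k - \tfrac{1}{k^2}\Pi_k P_{f,k}P_{g,k}\Pi_k,$$
and use the Leibniz identity $[P_{f,k},g]=X.g$ (immediate from (\ref{eq:der1}) and the Leibniz rule for $\nabla^{L^k}$) to replace $P_{f,k}g$ by $gP_{f,k}+X.g$. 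On the right-hand side, I would compute $P_{fg,k}$ using $X_{fg}=fX_g+gX$ and condition (\ref{eq:der2}) for $D$, obtaining
$$P_{fg,k}=fP_{g,k}+gP_{f,k}+\tfrac{1}{2}\bigl(df(X_g^{1,0})+dg(X^{1,0})\bigr).$$

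After these substitutions the terms $\Pi_k(fg)\Pi_k$ and the $fP_{g,k}+gP_{f,k}$ pieces cancel on both sides, so the LHS-minus-RHS discrepancy reduces to
$$\tfrac{i}{2k}\Pi_k\bigl(X.g-df(X_g^{1,0})-dg(X^{1,0})\bigr)\Pi_k \;-\; \tfrac{1}{k^2}\Pi_k P_{f,k}P_{g,k}\Pi_k.$$
The next — and main — task is to establish the scalar identity
$$X.g-df(X_g^{1,0})-dg(X^{1,0})=-2\,\om(X^{0,1},X_g^{1,0}).$$
I would derive this from three observations: since $\om$ is of type $(1,1)$, the defining relation $df=\om(X,\cdot)$ restricted to $T^{1,0}M$ gives $df(X_g^{1,0})=\om(X^{0,1},X_g^{1,0})$ and symmetrically $dg(X^{1,0})=\om(X_g^{0,1},X^{1,0})$; reality of $g$ yields $X.g=dg(X^{1,0})+\overline{dg(X^{1,0})}$; and reality of $\om$ implies $\overline{dg(X^{1,0})}=-\om(X^{0,1},X_g^{1,0})$.

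With this simplification the discrepancy equals $-\tfrac{i}{k}\Pi_k\om(X^{0,1},X_g^{1,0})\Pi_k-\tfrac{1}{k^2}\Pi_k P_{f,k}P_{g,k}\Pi_k$. The first summand is a Toeplitz operator in $\Fa_2\cap\To$ with $\si_2$-symbol $-i\hbar\,\om(X^{0,1},X_g^{1,0})$ by the standard contravariant calculus applied to a scalar multiplicator; the second lies in $\Fa_2$ with $\si_2$-symbol $+i\hbar\,\om(X^{0,1},X_g^{1,0})$ by Lemma~\ref{lem:3}. These $\si_2$-symbols cancel, so the sum belongs to $\Fa_3$; being a Toeplitz operator, by the parity property it lies in $\Fa_3\cap\To=\Fa_4\cap\To$, which is exactly what is needed.
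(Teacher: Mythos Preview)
Your proof is correct and follows essentially the same route as the paper: both expand the two sides, compute $P_{fg,k}$ via (\ref{eq:der2}), and reduce the discrepancy to a scalar Toeplitz term plus $-\tfrac{1}{k^2}\Pi_k P_{f,k}P_{g,k}\Pi_k$, then match their $\si_2$-symbols using Lemma~\ref{lem:3} and conclude by $\Fa_3\cap\To=\Fa_4\cap\To$. The only cosmetic difference is that the paper first simplifies $P_{fg,k}$ to $fP_{g,k}+gP_{f,k}+df(Y^{1,0})+\tfrac12 X.g$ (via $-dg(X^{0,1})=df(Y^{1,0})$), whereas you carry the scalar $X.g-df(X_g^{1,0})-dg(X^{1,0})$ and simplify it to $-2\,\om(X^{0,1},X_g^{1,0})$; since $df(Y^{1,0})=\om(X^{0,1},X_g^{1,0})$, these are the same computation.
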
 
\begin{proof} Let $X$ and $Y$ be the Hamiltonian vector fields of $f$ and $g$ respectively. Then the Hamiltonian vector field of $fg$ is $f Y + g X$. Using Condition (\ref{eq:der2}), we obtain that
\begin{xalignat*}{2} P_{fg, k } = & f P_{g,k} + g P_{f, k } + \tfrac{1}{2} \bigl( df ( Y^{1,0}) + dg ( X^{1,0})  \bigr)  \\
 = &  f P_{g,k} + g P_{f, k } + \tfrac{1}{2} \bigl( df ( Y^{1,0}) + X.g - dg ( X^{0,1}) \bigr) \\
= &  f P_{g,k} + g P_{f, k } +  df ( Y^{1,0})  + \tfrac{1}{2} X.g 
\end{xalignat*}
where we have used that $$ - dg ( X^{0,1}) = \om ( X^{0,1}, Y) = \om  ( X^{0,1}, Y^{1,0}) =  \om  ( X,  Y ^{1,0} ) = df(Y ^{1,0})$$ because $T^{1,0}M$ and $T^{0,1} M$ are Lagrangian. 
So we have on the one hand that 
\begin{gather} \label{eq:onehand}
 fg + \tfrac{i}{k} P_{fg, k } +  \tfrac{i}{2k}  X.g  = fg + \tfrac{i}{k} \bigl( f P_{g,k} + g P_{f,k} + X.g \bigr) + \tfrac{i}{k} df ( Y^{1,0}) .
\end{gather}
On the other hand, we have
\begin{gather} \label{eq:otherhand}
\bigl( f + \tfrac{i}{k} P_{f,k} \bigr) \bigl( g + \tfrac{i}{k} P_{g,k} \bigr) =  fg  + \tfrac{i}{k} \bigr( g P_{f,k} + f P_{g,k} +  X. g \bigr) - \tfrac{1}{k^2} P_{f,k} P_{g, k}
\end{gather}
Clearly $ \Pi  \tfrac{i}{k} df ( Y^{1,0})\Pi$ belongs to $\Fa_2 $. Its symbol is $i \hb  df ( Y^{1,0})$. By Lemma \ref{lem:3}, $ (-\Pi_k \tfrac{1}{k^2} P_{f,k} P_{g, k}  \Pi_k ) \in \Fa_2 $ and has the same symbol. 
So
$$ \Pi_k  \tfrac{i}{k} df ( Y^{1,0})\Pi_k = -\Pi_k \tfrac{1}{k^2} P_{f,k} P_{g, k} \Pi_k  \mod \Fa_3.$$
Recall that $\Fa_3 \cap \To = \Fa_4 \cap \To$. So Equations (\ref{eq:onehand}) and (\ref{eq:otherhand}) imply the result. 
\end{proof}

\begin{theo} 
For any $f, g \in \Ci ( M )$, we have
$$ Q_k^D ( f) Q_k^D ( g) \equiv Q_k^D ( fg )  + \tfrac{1}{2ki} Q_k^D ( \{ f, g \} ) \mod \Fa_4 \cap \To$$
where $\{ f, g \}$ denotes the Poisson bracket of $f$ and $g$. 
\end{theo}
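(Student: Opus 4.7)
Write $A_k := f + \tfrac{i}{k} P_{f,k}$ and $B_k := g + \tfrac{i}{k} P_{g,k}$, so that $Q_k^D(f) = \Pi_k A_k \Pi_k$ and $Q_k^D(g) = \Pi_k B_k \Pi_k$. The product is $Q_k^D(f) Q_k^D(g) = \Pi_k A_k \Pi_k B_k \Pi_k$, whereas Lemma \ref{lem:4} computes the ``contracted'' expression $\Pi_k A_k B_k \Pi_k$. The plan is therefore: (i) show that the difference between the two is absorbed in $\Fa_4 \cap \To$, and (ii) rewrite the output of Lemma \ref{lem:4} in terms of $Q_k^D(fg)$ and $Q_k^D(\{f,g\})$ modulo $\Fa_4 \cap \To$.

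For step (i), I will massage the difference using only $\Pi_k^2 = \Pi_k$. Using the decomposition $\Pi_k A_k = \Pi_k A_k \Pi_k - \Pi_k [A_k,\Pi_k]$, the term $\Pi_k A_k \Pi_k (\Pi_k - 1) = 0$, and the identity $(1-\Pi_k) B_k \Pi_k = [B_k, \Pi_k]\Pi_k$, a short calculation gives the key algebraic identity
\begin{equation*}
\Pi_k A_k \Pi_k B_k \Pi_k \;=\; \Pi_k A_k B_k \Pi_k \;+\; \Pi_k [A_k,\Pi_k][B_k,\Pi_k]\Pi_k .
\end{equation*}
This is the only place where the two-sided sandwich by $\Pi_k$ enters non-trivially. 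The main (and in fact only serious) step is then to observe that both commutators belong to $\Fa_2$ by Lemma \ref{lem:2}, so by the algebra property $\Fa_2 \cdot \Fa_2 \subset \Fa_4$ the correction term lies in $\Fa_4$, and since it is sandwiched between two $\Pi_k$'s it is also in $\To$. Hence it is negligible modulo $\Fa_4 \cap \To$.

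For step (ii), apply Lemma \ref{lem:4} to rewrite $\Pi_k A_k B_k \Pi_k \equiv \Pi_k\bigl(fg + \tfrac{i}{k} P_{fg,k} + \tfrac{i}{2k} X.g\bigr)\Pi_k$ modulo $\Fa_4 \cap \To$, where $X$ is the Hamiltonian vector field of $f$. The first two terms reconstitute $Q_k^D(fg)$. For the third, note that with the convention $df = \om(X,\cdot)$ one has $X.g = -\{f,g\}$, hence $\tfrac{i}{2k} X.g = \tfrac{1}{2ki}\{f,g\}$. Finally, $\tfrac{1}{2ki} Q_k^D(\{f,g\})$ differs from $\tfrac{1}{2ki}\Pi_k \{f,g\}\Pi_k$ by the term $\tfrac{1}{2k^2} \Pi_k P_{\{f,g\},k} \Pi_k$, which sits in $k^{-1}(\Fa_2 \cap \To) \subset \Fa_4 \cap \To$ by the characterisation $T \in \Fa_{2m} \cap \To \Leftrightarrow \si_{\op{cont}}(T) = \bigo(\hb^m)$. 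Assembling the pieces yields the claimed congruence. The only delicate point in the argument is the algebraic identity above; once it is in place, everything reduces to Lemmas \ref{lem:2} and \ref{lem:4} together with the multiplicative behaviour of the filtration $(\Fa_m)$.
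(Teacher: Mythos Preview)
Your proof is correct and follows essentially the same route as the paper: the algebraic identity $\Pi_k A_k \Pi_k B_k \Pi_k = \Pi_k A_k B_k \Pi_k + \Pi_k[A_k,\Pi_k][B_k,\Pi_k]\Pi_k$ is exactly the paper's formula $S(f,g,k) = Q_k^D(f)Q_k^D(g) - \Pi_k A_k B_k \Pi_k$, and both proofs then invoke Lemma~\ref{lem:2} to place the correction in $\Fa_4\cap\To$ and Lemma~\ref{lem:4} to finish. You even make explicit the small step (replacing $\tfrac{1}{2ki}\Pi_k\{f,g\}\Pi_k$ by $\tfrac{1}{2ki}Q_k^D(\{f,g\})$ via $k^{-1}(\Fa_2\cap\To)\subset\Fa_4\cap\To$) that the paper leaves implicit.
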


\begin{proof} Introduce the operators
$$S(f,g,k) =  \Pi_k [ f + \tfrac{i}{k} P_{f,k} , \Pi_k ] [ g + \tfrac{i}{k} P_{g,k} , \Pi_k ]\Pi_k  $$
By Lemma \ref{lem:2}, the family $( S(f,g,k))$ belongs to $ \Fa_4 \cap \To$.
A straightforward computations shows that
\begin{gather} \label{eq:sfgk}
 S(f,g,k) = Q_k^D ( f) Q_k^D ( g) - \Pi_k \bigl(  f + \tfrac{i}{k} P_{f,k} \bigr) \bigl( g + \tfrac{i}{k} P_{g,k} \bigr) \Pi_k
\end{gather}
So we have 
\begin{gather} 
   \Pi_k \bigl(  f + \tfrac{i}{k} P_{f,k} \bigr) \bigl( g + \tfrac{i}{k} P_{g,k} \bigr) \Pi_k =  Q_k^D ( f) Q_k^D ( g)  \mod \Fa_4 \cap \To .
\end{gather} 
We conclude with Lemma \ref{lem:3}. 
\end{proof}

\begin{lemme} \label{lem:5}
For any $f, g \in \Ci (M)$, we have
\begin{xalignat*}{2}   
[ f + \tfrac{i}{k} P_{f,k} , g + \tfrac{i}{k} P_{g,k} ] = & \tfrac{1}{ik} \bigl( \{ f, g \} + \tfrac{i}{k} P_{\{ f, g \} , k} \bigr) \\ &  - \tfrac{1}{k^2}  \op{id} \otimes \bigl( i R  (X,Y) + \tfrac{1}{2} B_j (X,Y) \bigr)  
\end{xalignat*} 
where $R$ is the 2-form defined in Proposition \ref{prop:met_com}, $X$ and $Y$ are the Hamiltonian vector fields of $f$ and $g$,  and $B_j (X,Y)$ is the function defined in Lemma \ref{lem:com}.
\end{lemme}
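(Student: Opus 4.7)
The approach is a direct expansion. Since $f$ and $g$ commute as multiplication operators,
$$[f+\tfrac{i}{k}P_{f,k},\,g+\tfrac{i}{k}P_{g,k}]=\tfrac{i}{k}\bigl([f,P_{g,k}]+[P_{f,k},g]\bigr)-\tfrac{1}{k^{2}}[P_{f,k},P_{g,k}],$$
and I would treat the two groups separately.

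For the cross terms, I use that $P_{f,k}$ and $P_{g,k}$ are derivations along the Hamiltonian vector fields $X$ and $Y$, so $[f,P_{g,k}]=-(Y.f)$ and $[P_{f,k},g]=X.g$ as multiplication operators. With the convention $df=\om(X_{f},\cdot)$ and $\{f,g\}=\om(X_{f},X_{g})$, one computes $Y.f=\om(X_{f},X_{g})=\{f,g\}$ and $X.g=\om(X_{g},X_{f})=-\{f,g\}$, so the cross-term contribution is $-\tfrac{2i}{k}\{f,g\}$.

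The main step is the evaluation of $[P_{f,k},P_{g,k}]$. I would test on a decomposable section $s_{L}\otimes s_{A}$: writing out $P_{f,k}P_{g,k}(s_{L}\otimes s_{A})$ and subtracting $P_{g,k}P_{f,k}(s_{L}\otimes s_{A})$, the mixed cross terms $(\nabla^{L^{k}}_{X}s_{L})\otimes(D_{Y}s_{A})$ and $(\nabla^{L^{k}}_{Y}s_{L})\otimes(D_{X}s_{A})$ appear symmetrically and cancel, yielding
$$[P_{f,k},P_{g,k}]=[\nabla^{L^{k}}_{X},\nabla^{L^{k}}_{Y}]\otimes\op{id}+\op{id}\otimes[D_{X},D_{Y}].$$
Since the curvature of $\nabla^{L^{k}}$ is $\tfrac{k}{i}\om$, the first summand is $\nabla^{L^{k}}_{[X,Y]}+\tfrac{k}{i}\om(X,Y)$. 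For the second, Proposition \ref{prop:met_com} gives $D_{[X,Y]}+iR(X,Y)+\tfrac{1}{2}B_{j}(X,Y)$. Regrouping, the derivation parts recombine into $P_{[X,Y],k}$. To re-express this in terms of $X_{\{f,g\}}$ I invoke $[X_{f},X_{g}]=-X_{\{f,g\}}$, which follows from $\iota_{[X_{f},X_{g}]}\om=\mathcal{L}_{X_{f}}(\iota_{X_{g}}\om)-\iota_{X_{g}}\mathcal{L}_{X_{f}}\om=d(X_{f}.g)=-d\{f,g\}$; hence $P_{[X,Y],k}=-P_{\{f,g\},k}$.

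Assembling everything, the scalar piece $-\tfrac{1}{k^{2}}\cdot\tfrac{k}{i}\om(X,Y)=-\tfrac{1}{ik}\{f,g\}$ coming from the line-bundle curvature combines with the cross-term $-\tfrac{2i}{k}\{f,g\}=\tfrac{2}{ik}\{f,g\}$ to give exactly $\tfrac{1}{ik}\{f,g\}$; the piece $-\tfrac{1}{k^{2}}P_{[X,Y],k}=\tfrac{1}{k^{2}}P_{\{f,g\},k}$ is precisely $\tfrac{1}{ik}\cdot\tfrac{i}{k}P_{\{f,g\},k}$; and the remaining $-\tfrac{1}{k^{2}}\op{id}\otimes\bigl(iR(X,Y)+\tfrac{1}{2}B_{j}(X,Y)\bigr)$ matches the claim. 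The argument presents no conceptual obstacle; the main difficulty is the bookkeeping of signs and factors of $i$, in particular tracking the interplay between the conventions $df=\om(X_{f},\cdot)$, $\{f,g\}=\om(X_{f},X_{g})$ and the induced identity $[X_{f},X_{g}]=-X_{\{f,g\}}$ that is responsible for the sign in front of $P_{\{f,g\},k}$.
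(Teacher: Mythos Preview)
Your proof is correct and follows essentially the same route as the paper: the paper invokes the ``famous computation'' $[f+\tfrac{i}{k}\nabla^{L^k}_X,\,g+\tfrac{i}{k}\nabla^{L^k}_Y]=\tfrac{i}{k}(-\{f,g\}+\tfrac{i}{k}\nabla^{L^k}_{[X,Y]})$ for the Kostant--Souriau operators and then appeals to Proposition~\ref{prop:met_com} for the $D_X$ contribution, while you simply spell out that computation term by term. A minor notational point: your symbol $P_{[X,Y],k}$ is not defined in the paper (the $P_{h,k}$ are indexed by functions), but its intended meaning $\nabla^{L^k}_{[X,Y]}\otimes\op{id}+\op{id}\otimes D_{[X,Y]}$ is clear from context.
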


\begin{proof} 
A famous computation shows that 
$$ [ f + \tfrac{i}{k} \nabla^{L^k}_X , g + \tfrac{i}{k} \nabla^{L^k}_Y ] =  \tfrac{i}{k} \bigl( - \{ f, g \} +  \tfrac{i}{k} \nabla^{L^k}_{[X,Y]} \bigr) $$
where $X$ and $Y$ the Hamiltonian vector fields of $f$ and $g$. The result is now a consequence of Proposition \ref{prop:met_com}. 
\end{proof}

\begin{lemme} \label{lem:6}
For any $f, g \in \Ci (M)$, $ S(f,g,k) - S(g,f,k) $ belongs to $\Fa_4$ and its symbol is $\frac{\hb^2}{2}  B_j ( X,Y)$.   
\end{lemme}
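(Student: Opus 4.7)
The claim that $S(f,g,k) - S(g,f,k) \in \Fa_4$ follows immediately from the fact that $[A, \Pi_k], [B, \Pi_k] \in \Fa_2$ (Lemma \ref{lem:2}) combined with the grading $\Fa_2 \cdot \Fa_2 \subset \Fa_4$, which places both $S(f,g,k)$ and $S(g,f,k)$ in $\Fa_4$.

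For the symbol, the plan is to apply the multiplicativity $\sigma_{p+q}(PQ)=\sigma_p(P)\star\sigma_q(Q)$ to write
\[ \sigma_4(S(f,g,k)) = 1_A \star \phi_f \star \phi_g \star 1_A, \]
where $\phi_f(z,\bar z) = \sum_{i,j}\om(\bar\partial_i,[\bar\partial_j,X_f])\bar z_i\bar z_j - \sum_{i,j}\om(\partial_i,[\partial_j,X_f])z_i z_j$ (and similarly for $\phi_g$) is the scalar polynomial symbol given by Lemma \ref{lem:2}. The task reduces to evaluating the antisymmetric combination $1_A \star (\phi_f \star \phi_g - \phi_g \star \phi_f) \star 1_A$ via the explicit formula \eqref{eq:prod_formal}.

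The star-product computation is finite because each $\phi$ is quadratic. Decomposing $\phi_f = \phi_f^+(\bar z) + \phi_f^-(z)$ into its purely antiholomorphic and purely holomorphic quadratic parts (and similarly $\phi_g$), the expansion of $\phi_f(-u,\bar z+\bar u)\phi_g(z+u,-\bar u)$ yields four pieces: those of type $\phi_f^+\phi_g^+$ or $\phi_f^-\phi_g^-$ lack the $\bar u$--$u$ pairings required to survive the exponential Laplacian and vanish under $[\exp(\hb\Delta)\,\cdot\,]_{u=\bar u=0}$, while the cross terms $\phi_f^+\phi_g^-$ and $\phi_f^-\phi_g^+$ contribute finite sums of Wick-type contractions at orders $\hb^0$, $\hb^1$, $\hb^2$. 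After antisymmetrization in $f,g$ and application of the outer factors $1_A\star\cdot$ and $\cdot\star 1_A$ (each of which performs additional $\bar u$--$u$ contractions), all residual $z,\bar z$ dependence cancels and the result collapses to a pure scalar of order $\hb^2$.

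To identify this scalar with $B_j(X,Y)$, I would work in an $\om$-orthonormal holomorphic frame so that $\om(\partial_i,\bar\partial_j) = -i\delta_{ij}$. Cartan's formula gives
\[ \om(\bar\partial_i,[\bar\partial_j,X]) = i\,(\mathcal L_X\theta_i)(\bar\partial_j), \qquad \om(\partial_i,[\partial_j,X]) = -i\,(\mathcal L_X\bar\theta_i)(\partial_j), \]
and from $\mathcal L_X\om = 0$ combined with the vanishing of $\om^{(2,0)}$ and $\om^{(0,2)}$ these coefficient matrices are symmetric in $(i,j)$. Substituting into the contraction sum from the previous paragraph recovers precisely the double trace defining $B_j(X,Y)$ in Lemma \ref{lem:com}. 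The main obstacle is the bookkeeping in the star-product expansion: three separate orders of $\hb$ contribute and the final closed expression emerges only after systematic cancellations between the $\phi^\pm$ cross terms and the outer contractions.
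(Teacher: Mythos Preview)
Your proposal is correct and follows essentially the same route as the paper: compute $\si_4(S(f,g,k)) = 1_A \star \si_X \star \si_Y \star 1_A$ from the quadratic symbol of Lemma~\ref{lem:2} via the product formula~\eqref{eq:prod_formal}, then identify the resulting scalar with $B_j(X,Y)$ through $\te_i = \tfrac{1}{i}\om(\cdot,\con{\partial}_i)$ and $\mathcal L_X\om=0$. The paper's one organizational shortcut is to group the product as $(1_A\star\si_X)\star(\si_Y\star 1_A)$, which immediately collapses to $\phi_f^-(z)\star\phi_g^+(\con z)$ and hence to the single contraction $\tfrac{\hb^2}{2}\sum_{i,j}(\partial_i\partial_j\si_X)(\con{\partial}_i\con{\partial}_j\si_Y)$; thus each $S(f,g,k)$ already has a pure $\hb^2$-scalar symbol and antisymmetrization is needed only at the very end to match the form of $B_j$, not to eliminate residual $z,\con z$ dependence as you suggest.
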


\begin{proof}
By Lemma \ref{lem:2}, the symbol $\si_X$ of $([ f + \tfrac{i}{k} P_{f,k}, \Pi_k])$ has the form $ f_X(z) - \con{f_X(z)}$ where $f_X$ is quadratic. Using this fact, a direct computation shows that
$$ 1_A \star \si_X \star \si_Y \star 1_A = \tfrac{\hb^2}{2} \sum_{i,j} (\partial_i \partial_j \si_X) ( \con{\partial}_i \con{\partial}_j \si_Y) .$$
where we denote by $\star$ the product given in Equation (\ref{eq:prod_formal}). 
By  Lemma \ref{lem:2}, 
$$ (\partial_i \partial_j \si_X) = - \om ( \partial_i , [\partial_j , X]), \qquad  (\con{\partial}_i \con{\partial}_j \si_Y) =  \om ( \con{\partial}_i , [\con{\partial}_j , Y]) .$$
Since $(\partial_i)$ is an orthonormal frame, the dual frame is given by $\te_i = \tfrac{1}{i} \om ( \cdot, \con{\partial}_i)$. Since $Y$ preserves $\om$, we get $ \mathcal{L}_Y \te_i = \tfrac{1}{i} \om ( \cdot, [ Y, \con{\partial}_i] ) $. Replacing $Y$ by $X$ and taking the conjugate, $ \mathcal{L}_X \con{\te}_i = -\tfrac{1}{i} \om ( \cdot, [ X, \partial_i] ) $. Consequently
$$  1_A \star \si_X \star \si_Y \star 1_A = -  \tfrac{\hb^2}{2} \sum_{i,j} (\mathcal{L}_X \con{\te}_i)( \partial_j) (\mathcal{L}_Y \te_i) ( \con{\partial}_j)
$$ 
Antisymmetrising, we get the final result. 
\end{proof} 

\begin{theo} 
For $f,g \in \Ci (M)$, we have that 
$$ [Q_k^D (f), Q_k^D ( g) ] = \tfrac{1}{ik} \bigl( Q_k^D ( \{ f, g \} ) + \tfrac{1}{k} \Pi_k R (X,Y) \Pi_k \bigr)  \mod \Fa_6 \cap \To$$
where $X$ and $Y$ are the Hamiltonian vector fields of $f$ and $g$. 
\end{theo}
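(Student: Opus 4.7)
The plan is to expand the commutator using the same decomposition that was used for the product, then keep track of one extra order beyond what the previous theorem did. Starting from the identity (\ref{eq:sfgk}) and its version with $f$ and $g$ interchanged, we get
\[
 [Q_k^D(f), Q_k^D(g) ] = \bigl( S(f,g,k) - S(g,f,k) \bigr) + \Pi_k \bigl[ f + \tfrac{i}{k} P_{f,k}, \, g + \tfrac{i}{k} P_{g,k} \bigr] \Pi_k,
\]
so the task reduces to analysing the two summands on the right modulo $\Fa_6 \cap \To$.

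For the second summand, Lemma \ref{lem:5} gives an exact formula for the inner commutator. Sandwiching by $\Pi_k$ and recognising that $\Pi_k \bigl( \{f,g\} + \tfrac{i}{k} P_{\{f,g\},k} \bigr) \Pi_k = Q_k^D(\{f,g\})$, we obtain
\[
 \Pi_k [ f + \tfrac{i}{k} P_{f,k}, \, g + \tfrac{i}{k} P_{g,k} ] \Pi_k = \tfrac{1}{ik} Q_k^D( \{f,g\}) - \tfrac{i}{k^2} \Pi_k R(X,Y) \Pi_k - \tfrac{1}{2k^2} \Pi_k B_j(X,Y) \Pi_k.
\]
Since $\tfrac{1}{ik} \cdot \tfrac{1}{k} \Pi_k R(X,Y) \Pi_k = -\tfrac{i}{k^2} \Pi_k R(X,Y) \Pi_k$, the first two terms already match the right-hand side of the statement. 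So everything boils down to showing that the spurious term $-\tfrac{1}{2k^2} \Pi_k B_j(X,Y) \Pi_k$ is cancelled by $S(f,g,k) - S(g,f,k)$ modulo $\Fa_6 \cap \To$.

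This cancellation is precisely what Lemma \ref{lem:6} is designed for. Both $S(f,g,k)-S(g,f,k)$ and $\tfrac{1}{2k^2}\Pi_k B_j(X,Y)\Pi_k$ lie in $\To \cap \Fa_4$: the first by Lemma \ref{lem:6}, the second because $\Pi_k B_j(X,Y) \Pi_k$ is a Toeplitz operator with scalar principal symbol $B_j(X,Y)$ and the $k^{-2}$ prefactor translates into a $\hb^2$ prefactor in its contravariant symbol. By Lemma \ref{lem:6} the symbol in $\Fa_4/\Fa_5$ of $S(f,g,k)-S(g,f,k)$ is $\tfrac{\hb^2}{2} B_j(X,Y)$, which is exactly the symbol of $\tfrac{1}{2k^2}\Pi_k B_j(X,Y)\Pi_k$. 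Their difference therefore lies in $\To \cap \Fa_5 = \To \cap \Fa_6$, using the parity identity $\Fa_{2m+1} \cap \To = \Fa_{2m+2} \cap \To$ recalled in Section \ref{sec:schw-kern-toepl}. Substituting this cancellation back yields the claimed equality modulo $\Fa_6 \cap \To$. The only delicate point is the bookkeeping of symbols in the $\Fa_4/\Fa_5$ quotient; the choice of normalisations in Lemmas \ref{lem:5} and \ref{lem:6} has been made so that the two $B_j$ contributions match on the nose.
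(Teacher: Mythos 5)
Your proposal is correct and follows essentially the same route as the paper: decompose the commutator via the identity (\ref{eq:sfgk}), evaluate the sandwiched commutator exactly with Lemma \ref{lem:5}, and cancel the residual $\tfrac{1}{2k^2}\Pi_k B_j(X,Y)\Pi_k$ term against $S(f,g,k)-S(g,f,k)$ using the symbol computation of Lemma \ref{lem:6} together with the parity property $\Fa_5\cap\To=\Fa_6\cap\To$. The sign bookkeeping ($\tfrac{1}{ik}\cdot\tfrac{1}{k}=-\tfrac{i}{k^2}$) and the identification of the symbol of $\tfrac{1}{2k^2}\Pi_k B_j(X,Y)\Pi_k$ in $\Fa_4/\Fa_5$ are both accurate.
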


\begin{proof}
By  Equation (\ref{eq:sfgk}), 
$$   S(f,g,k) - S(g,f,k) = [Q_k^D (f), Q_k^D ( g) ] - \Pi_k [ f + \tfrac{i}{k} P_{f,k} , g + \tfrac{i}{k} P_{g,k} ]\Pi_k $$ 
By Lemma \ref{lem:6}, the left hand side is equal to $\frac{1}{k^2} \Pi_k \frac{1}{2} B_j (X,Y) \Pi_k$ modulo $\Fa_6 \cap \To$. By  Lemma \ref{lem:5}, the right hand side is equal to 
$$  [Q_k^D (f), Q_k^D ( g) ] - \tfrac{1}{ik}  Q_k^D ( \{ f, g \}  + \tfrac{i}{k^2} \Pi_k R (X,Y) \Pi_k  + \tfrac{1}{k^2}  \Pi_k \tfrac{1}{2} B_j (X,Y) \Pi_k  $$
modulo $\Fa_6 \cap \To$. The result follows. 
\end{proof}

\bibliographystyle{alpha}
\bibliography{bibsub}

\end{document}